\documentclass[letterpaper,reqno, 12pt]{amsart}

\usepackage[top=1in, bottom=1in, left=1in, right=1in]{geometry}
\usepackage{amsmath}
\usepackage{amsthm}
\usepackage{amsfonts}
\usepackage{amssymb}
\usepackage{mathrsfs}
\usepackage{url}
\usepackage{enumerate}
\usepackage[shortlabels]{enumitem}
\usepackage{multirow}
\usepackage{float}
\usepackage{mathtools}
\usepackage{tikz-cd}
\usepackage{bm}
\usepackage{hyperref}
\hypersetup{pdfauthor={Tim Davis},%
	            pdftitle={Fourier Coefficients of Hilbert Modular Forms at Cusps},%
	            pdfsubject={Number Theory},%
	            pdfcreator={XeLaTeX},%
	            colorlinks=true,%
	            linkcolor=[rgb]{0,0,1},
	            urlcolor = blue,
	            citecolor=red}
\definecolor{cites}{rgb}{0.75 , 0.00 , 0.00}  
\definecolor{urls} {rgb}{0.00 , 0.00 , 1.00}  
\definecolor{links}{rgb}{0.00 , 0.00 , 0.5}   

\def\Z{{\mathbb Z}}

\def\R{{\mathbb R}}

\def\C{{\mathbb C}}
\def\H{{\mathbb H}}
\def\Q{{\mathbb Q}}
\def\A{{\mathbb A}}
\def\O{{\mathcal O}}
\def\a{{\mathfrak a}}
\def\p{{\mathfrak p}}
\def\n{{\mathfrak n}}
\def\D{{\mathfrak D}}
\def\Qp{{\mathbb{Q}_p}}
\def\AF{{\mathbb{A}_F}}

\def\iotaf{{\iota_\text{f}(\sigma^{-1})}}

\def\congsub{{\Gamma_\mu(\n)}}
\def\E(Q){{E(\mathbb{Q})}}

\def\bE(Q){{\bar{E}(\mathbb{Q})}}
\def\eps{\varepsilon}

\def\gl2R{\GL_2(\mathbb{R})}
\def\sl2Z{\SL_2(\mathbb{Z})}
\def\af{a_f(n;\sigma)}

\def\smatrix{(\begin{smallmatrix} a & b \\ c & d \end{smallmatrix})}

\def\bs{\backslash}
\def\f{{\mathbf f}}
\DeclareMathOperator{\Ima}{Im}

\DeclareMathOperator{\GL}{GL}
\DeclareMathOperator{\SL}{SL}

\DeclareMathOperator{\Aut}{Aut}
\DeclareMathOperator{\Gal}{Gal}
\DeclareMathOperator{\Tr}{Tr}

\newtheorem{IntroThm}{Theorem}
\newtheorem{Thm}{Theorem}[section]
\newtheorem{Lemma}[Thm]{Lemma}

\newtheorem{Prop}[Thm]{Proposition}

\theoremstyle{definition}

\newtheorem{Question}[Thm]{Question}

\theoremstyle{remark}
\newtheorem{Remark}[Thm]{Remark}

\setcounter{tocdepth}{1}

\begin{document} 

\title[Fourier coefficients at cusps]{Fourier Coefficients of Hilbert Modular Forms at Cusps}
\author{Tim Davis}

\begin{abstract}
The aim of this article is to study the fields generated by the Fourier coefficients of Hilbert newforms at arbitrary cusps. Precisely, given a cuspidal Hilbert newform $f$ and a matrix $\sigma$ in (a suitable conjugate of) the Hilbert modular group, we give a cyclotomic extension of the field generated by the Fourier coefficients at infinity which contains all the Fourier coefficients of $f||_k\sigma$. 
\end{abstract}

\maketitle

\tableofcontents

\section{Introduction}

Let $f$ be a normalised Hecke eigenform for $\Gamma_0(N)$ of weight $k$. It is known that the field generated by the Fourier coefficients of $f$ is a number field, see \cite[Proposition 2.8]{Shi78}. This number field we denote by $\Q(f)$. Furthermore, for any matrix $g\in\SL_2(\Z)$, an application of the $q$-expansion principle shows that the Fourier coefficients of $f|_kg$ lie in the cyclotomic extension $\Q(f)(\zeta_N)$, see \cite[Remark 12.3.5]{MFandMC} for more details. In recent years, algorithms have been developed to compute these Fourier coefficients. These were developed by several different authors, see \cite{HC,MDMN} for more details about these algorithms. Their methods use the knowledge that these numbers are in fact algebraic numbers. This means if we know more about the number field where these Fourier coefficients lie, this could speed up the computations. \\ 

Therefore the question is given a Hecke eigenform (which we may assume to be a newform) $f$ of level $N$ and weight $k$ and $\sigma\in\SL_2(\Z)$; what is the number field that the Fourier coefficients of $f|_k\sigma$ generate? Or a slightly weaker question is: can one write down an explicit subfield of $\Q(f)(\zeta_N)$, depending on the entries of $\sigma$, which contains all the Fourier coefficients of $f|_k\sigma$? This was answered in a paper of Brunault and Neururer, who proved the following result \cite[Theorem 4.1]{FEatC}.  \\

\textit{Let $f$ be a normalised newform on $\Gamma_0(N)$ with weight $k$. Let $\Q(f)$ be the field generated by all the Fourier coefficients of $f$. Let $\sigma=(\begin{smallmatrix} a & b \\ c & d \end{smallmatrix})\in\sl2Z$. Then the Fourier coefficients of $f|_k \sigma$ lie in the cyclotomic extension $\Q(f)(\zeta_{N'})$ where $N'=N/(cd,N)$}.\\

The method of Brunault and Neururer was classical. They use a result of Shimura \cite[Theorem 8]{Shi75}, which studied the connections between two actions on spaces of modular forms: the action of $\GL_2^+(\Q)$ via the slash-operator and the action of $\Aut(\C)$ on the Fourier coefficient of a modular form. The proof of Brunault and Neururer also applies to modular forms of $\Gamma_0(N)$ that are not necessarily newforms but in that case it is not known if the field is optimal. In this paper we give a new proof of the result of Brunault and Neuruer as well as a substantial generalisation (to the case of Hilbert modular forms) using adelic and local representation-theoretic methods. Specifically we use local Whittaker functions and their invariance properties. The starting point is an explicit formula for the Fourier coefficients of $f|_k\sigma$ given in \cite[Section 3]{ACAS}, in terms of local Whittaker functions. From there we find sufficient conditions for $\tau\in\Aut(\C)$ to fix this product. The advantage of this method is that it gives an insight into how one could prove analogous results for other families of modular forms, specifically it can be generalised to Hilbert modular forms. One would suspect that a similar method can also be used to prove a result of the same style for Whittaker coefficients (or other factorisable periods) of automorphic forms lying in cohomological automorphic representations of higher rank groups.\\ 

The main result of this paper is an extension of Brunault and Neururer stated above to the case of cuspidal Hilbert newforms. Let $F$ be a totally real number field of degree $n$ with narrow class group of size $h$. Let $\n$ be a fixed integral ideal of $\O_F$. For $\mu=1,...,h$, we define the congruence subgroup $\Gamma_\mu(\n)$ of $\GL_2(F)$ as \[\congsub=\bigg\{\begin{pmatrix} a & b \\  c & d \end{pmatrix}: a,d \in\O_F, b\in(t_\mu)^{-1}\mathfrak{D}_F^{-1}, c\in\n t_\mu\mathfrak{D}_F,ad-bc\in\O_F^\times\bigg\},\] where $\D_F$ is the absolute different of $F$ and $t_\mu$ is a representative of the narrow class group. \\ 

We identify $\alpha\in\GL_2(F)$ with an element of $\GL_2(\R)^n$ via the various embeddings of $F$ in $\R$ and for $y=(y_1,...,y_n)\in\C^n$ and $k=(k_1,...,k_n)\in\Z^n$ we use the notation $y^k:=y_1^{k_1}...y_n^{k_n}$.\\  

Then $f:\H^n\rightarrow\C$ is a Hilbert modular form of weight $k=(k_1,...,k_n)$ and level $\congsub$ if $f$ is holomorphic on $\H^n$ and cusps and we have $f||_k\alpha(z):=\det\alpha^{k/2}(cz+d)^{-k}f(\alpha z)=f(z)$ for every $\alpha=\smatrix\in\Gamma_\mu(\n)$. Any $f$ of the above form has a Fourier expansion of the form \[f(z)=\sum_\xi a(\xi;f)e^{2\pi i\Tr(\xi z)}.\] Following Shimura \cite[2.24]{Shi78}, we define for $f$ as above, $1\leq\mu\leq h$, \[c_\mu(\xi;f)=N(t_\mu\O_F)^{-k_0/2}a(\xi;f)\xi^{(k_0\mathbf{1}-k)/2},\] where $\mathbf{1}=(1,...,1)$ and $k_0=\max\{k_1,...,k_n\}$.\\

We say $f$ is a Hilbert cuspform if the the constant term in the Fourier expansion of $f||_k\gamma$ is zero for every $\gamma\in\GL_2^+(F)$. A cuspidal Hilbert newform of weight $k$ and level $\n$ is a tuple $\f=(f_1,...,f_h)$ where $f_\mu$ is a Hilbert cuspform for $\Gamma_\mu(\n)$ and such that $\f$ satisfies additional properties (it does not come from a form of lower level and is a Hecke eigenform), see \cite[Section 2]{Shi78} for more details. A normalised cuspidal Hilbert newform $\f$ (also called primitive form) is one such that the first Fourier coefficient of $\f$ (which is non-zero) is normalised to equal 1. A result of Shimura \cite[Proposition 2.8]{Shi78}, tells us that the set of all $\{c_\mu(\xi;f_\mu):1\leq\mu\leq h \ \text{and} \ \xi\in F_+\}$ generates a totally real or CM number field, denoted $\Q(\f)$, under the assumption that $k_1\equiv...\equiv k_n \pmod{2}$. We prove the following result.

\begin{IntroThm}[Theorem \ref{Thm}]\label{HMFThm}
Let $\f=(f_1,...,f_h)$ be a normalised Hilbert newform of weight $k=(k_1,...,k_n)$ with $k_1\equiv...\equiv k_n \pmod{2}$ and level $\n$. Let $1\leq\mu\leq h$ and let $\sigma=\smatrix\in\Gamma_\mu(1)$. Let $f_\mu||_k\sigma$ have the Fourier expansion \[f_\mu||_k\sigma(z)=\sum_{\xi}a_\mu(\xi;\sigma)e^{2\pi i \Tr(\xi z)},\] and define \[c_\mu(\xi;f_\mu||_k\sigma)=N(t_\mu\O_F)^{-k_0/2}a_\mu(\xi;\sigma)\xi^{(k_0\mathbf{1}-k)/2}.\] Then $c_\mu(\xi;f_\mu||_k\sigma)$ lie in the number field $\Q(\f)(\zeta_{N_0})$ where $N_0$ is the integer such that $N_0\Z=\n/(cdt_\mu^{-1}\mathfrak{D}_F^{-1},\n)\cap\Z$.
\end{IntroThm}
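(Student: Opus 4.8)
The plan is to pass to the adelic/representation-theoretic picture and reduce the statement to a local computation with Whittaker functions, mirroring the strategy described in the introduction for the classical case but carried out over the totally real field $F$. First I would attach to the normalised newform $\f$ the corresponding cuspidal automorphic representation $\pi = \otimes_v \pi_v$ of $\GL_2(\A_F)$, and express the Fourier coefficient $a_\mu(\xi;\sigma)$ — equivalently the normalised coefficient $c_\mu(\xi;f_\mu||_k\sigma)$ — as an essentially explicit product over the finite places of local Whittaker values $W_{\pi_p}(g_p)$, where the local element $g_p$ depends on $\sigma$, on $t_\mu$, and on $\xi$, via the explicit formula of \cite[Section 3]{ACAS}. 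The archimedean factors contribute the power of $\xi$ and the normalising constant $N(t_\mu\O_F)^{-k_0/2}$, which is why these are exactly the factors stripped off in the definition of $c_\mu$, so after normalisation we are left with a finite product of algebraic numbers coming from the non-archimedean Whittaker functions.

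The next step is to understand the Galois action. For $\tau \in \Aut(\C)$, I would use Shimura's description (the Hilbert-modular analogue of \cite[Theorem 8]{Shi75}, which Shimura established in \cite{Shi78}) of how $\tau$ acts on the coefficients $c_\mu(\xi;\f)$: it permutes the components of $\f$ and acts on the coefficients through the action of $\tau$ on $\Q(\f)$, and correspondingly there is a twisted representation $\pi^\tau$ whose local components are explicit twists of the $\pi_v$. Concretely, at a finite place $p$ the effect of $\tau$ on the local Whittaker function is given by composition with a diagonal torus element whose entries involve the cyclotomic character, i.e. $W_{\pi_p}^\tau(g) = W_{\pi_p}\big(g\,\mathrm{diag}(u_p,1)\big)$ for a suitable $p$-adic unit $u_p$ determined by $\tau|_{\Q^{ab}}$. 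The task is then to determine for which $\tau$ this substitution leaves every local factor in the product for $c_\mu(\xi;f_\mu||_k\sigma)$ unchanged, for all $\xi$ simultaneously.

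The heart of the argument is the local invariance computation. At a place $p$ not dividing $\n$ (or more precisely not dividing the relevant part of $\n$), $\pi_p$ is spherical and its Whittaker function is bi-invariant enough that the torus substitution has no effect, so such places impose no constraint. At the ramified places the newvector Whittaker function is invariant under $\mathrm{diag}(u,1)$ precisely when $u$ is a unit modulo the conductor; combining this over all $p$, together with the dependence of $g_p$ on $c$, $d$, $t_\mu$ and $\D_F$ coming from the shape of $\sigma \in \Gamma_\mu(1)$, one finds that $\tau$ fixes the product for every $\xi$ as soon as $\tau$ fixes $\zeta_{N_0}$, where the modulus $N_0$ is exactly the one cut out by the ideal $\n/(cdt_\mu^{-1}\D_F^{-1},\n)$ intersected with $\Z$ — the $\GL_2(F)$-version of the classical $N/(cd,N)$. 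Hence every $\tau \in \Aut(\C)$ fixing $\Q(\f)(\zeta_{N_0})$ fixes all the $c_\mu(\xi;f_\mu||_k\sigma)$, and since these coefficients are algebraic (they lie in $\Q(\f)(\zeta_N)$ by a $q$-expansion argument at the outset) this forces them into $\Q(\f)(\zeta_{N_0})$.

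I expect the main obstacle to be the precise bookkeeping at the ramified finite places: matching the local Whittaker-function invariance condition with the correct power of the cyclotomic character, and tracking how the ideal $(t_\mu)^{-1}\D_F^{-1}$ in the definition of $\Gamma_\mu(\n)$ enters the local elements $g_p$, so that the resulting global modulus is exactly $N_0$ and not merely a multiple of it. Getting the normalisations of the Whittaker model and of Shimura's periods to be compatible — so that no stray archimedean or normalising factor survives — is the other delicate point, and it is what makes working with $c_\mu$ rather than $a_\mu$ essential.
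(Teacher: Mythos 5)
Your proposal follows essentially the same route as the paper: express $c_\mu(\xi;f_\mu||_k\sigma)$ as a product of local Whittaker newform values via the adelic dictionary (Proposition~\ref{prop}), track the action of $\tau$ on each local factor through insertion of the torus element $a(\alpha_\tau)$ given by the cyclotomic character (Lemmas~\ref{key} and~\ref{relation}), and reduce to the invariance check $x_\mu^{-1}\sigma a(\alpha_\tau)\sigma^{-1}x_\mu\in K_v(\n)$, which holds exactly when $\alpha_\tau\equiv 1$ modulo the local ideal cut out by $N_0$, followed by the product-formula clean-up of the archimedean and $|t_\mu|$ factors. The only small imprecision is that Lemma~\ref{key} places $a(\alpha_\tau)$ on the \emph{left} of the argument rather than the right; after conjugating it past $a(\xi)$ and $\iota_{\mathrm f}(\sigma^{-1})x_\mu$ this becomes the right-invariance check you describe, so the logic is unchanged.
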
 

To prove this theorem we study the classical action of $\Aut(\C)$ on the Fourier coefficients via the action of $\Aut(\C)$ on local Whittaker newforms. We give an explicit formula for the Fourier coefficients of a classical Hilbert automorphic form in terms of a global Whittaker function. We can then apply this general result to the specific newform in the theorem. This generalises two results of \cite{ACAS}, namely Lemma 3.1 and Propostion 3.2, where the authors proved analogous results for modular forms. The global Whittaker newform can be broken up into a product of local Whittaker newforms. Therefore we have that we can write the Fourier coefficients of $f_\mu||_k\sigma$ in terms of local Whittaker newforms. Thus studying the action of $\Aut(\C)$ on these Fourier coefficients is equivalent to studying the action of $\Aut(\C)$ on local Whittaker newforms.

\subsection*{Acknowledgments}

The author would like to thank his supervisor, Abhishek Saha, for suggesting this problem and for many helpful discussions. This research is supported by the Leverhulme Trust, grant number RPG-2018-401.

\section{Background Setting and Notation}

Here we discuss the background setting and collect the notation which will be used throughout this paper.

\subsection{Background notation}\label{action}

For an integer $N\geq1$, we define $\zeta_N=e^{2\pi i/N}$. For ease of notation we will write $e^{2\pi i x}$ as $e(x)$. We will use the shorthand for the following matrices, $a(x):=\big(\begin{smallmatrix} x & \\ & 1\end{smallmatrix}\big)$ and $n(x):=\big(\begin{smallmatrix} 1 & x\\ & 1\end{smallmatrix}\big)$. We denote the upper half plane by $\H$. We let $\GL_2^+(\R)$ denote the real two by two matrices with positive determinant. This group acts on $\H$ via the action $\smatrix z=\frac{az+b}{cz+d}$. For a representation $\Pi$ we define the representation ${}^{\tau}\Pi$ as follows. Let $V$ be the space of $\Pi$ and let $V'$ be any vector space such that $t:V\rightarrow V'$ is a $\tau$-linear isomorphism. We define the representation $({}^{\tau}\Pi,V')$ via ${}^{\tau}\Pi(g)=t\circ\Pi(g)\circ t^{-1}$.

\subsection{Number fields}

Let $F$ be a totally real number field of degree $n$, with narrow class group of size $h$. We denote the ring of integers of $F$ by $\O_F$. The embeddings of $F$ in $\C$ we write as $\eta_1,...,\eta_n$. With respect to these embeddings we have that $F$ naturally embeds into $\R^n$. For an element $\alpha$ of $F$ write $(\alpha_1,...,\alpha_n)$ for $(\eta_1(\alpha),...,\eta_n(\alpha))$ as an element of $\R^n$. We write $\alpha^k=\prod_{j=1}^n\alpha_j^{k_j}$ for $k=(k_1,...,k_n)$. For a subset $S$ of $F$, $S_+$ denotes the totally positive elements of $S$. Throughout $\n$ will be a fixed integral ideal of $F$. We denote the trace map of $F$ to $\Q$ by $\Tr$. Let $\D_F$ denote the absolute different of $F$, that is, $\D_F^{-1}=\{x\in F: \Tr(x\O_F)\subset\Z\}$. Let $F_v$ be the completion of $F$ at a non-archimedean place $v$. The ring of integers of $F_v$ are denoted by $\O_v$. We denote the maximal ideal of $\O_v$ by $\p_v$ and a generator of $\p_v$ by $\varpi_v$. We write $\n_v=\n\otimes_{\O_F}\O_v$ and $\mathfrak{D}_v=\mathfrak{D}_F\otimes_{\O_F}\O_v$ to be the $v$-part of $\n$ and $\D_F$ respectively. For an ideal $\n_v$ of $\O_v$ we define the subgroup $K_v(\n_v)$ of $\GL_2(F_v)$ as
\begin{equation}\label{Kv}
K_v(\n_v):=\bigg\{\begin{pmatrix} a & b\\ c& d \end{pmatrix}\in\GL_2(F_v): \begin{aligned} & a\O_v+\n_v=\O_v, \ b\in\mathfrak{D}_v^{-1} \\ & c\in\n_v\mathfrak{D}_v, \ d\in\O_v \end{aligned}, ad-bc\in\O_F^\times\bigg\}.
\end{equation}
By abuse of notation, given an ideal $\n$ of $\O_F$ use $K_v(\n)$ to denote $K_v(\n_v)$ where $\n_v=\n\otimes_{\O_F}\O_v$ is the $v$-part of $\n$. We define $n_v$ and $d_v$ via $\n_v=\varpi_v^{n_v}\O_v$ and $\D_v=\varpi_v^{d_v}\O_v$ respectively. 

\subsection{Adeles}

The ring of adeles of $F$ is denoted by $\A_F$. In the specific case that $F=\Q$ we denote $\A_\Q=\A$. The narrow class group can be viewed as \[F^\times\bs\A_F^\times/F_{\infty+}^\times\prod\O_v^\times.\] We fix elements, for $1\leq\mu\leq h$, $t_\mu\in\A_F^\times$ such that \[
(t_\mu)_v
\begin{cases}
=1, \ \text{if}\  v\in\infty \ \text{or} \  v | \n \\
\in\O_v, \ \text{if} \ v\nmid\infty \  \text{and} \ v\nmid\n
\end{cases}
\] and $\{t_\mu\}$ are representatives of the narrow class group. We let $(t_\mu\O_F)$ denote the ideal of $\O_F$ corresponding to $t_\mu$. We have the following disjoint union decomposition \[\A_F^\times=\bigcup_{1\leq\mu\leq h} t_\mu F^\times F^\times_{\infty,+}\prod_{v<\infty}\O_v^\times.\] We denote $x_\mu=(\begin{smallmatrix}1 & \\ & t_\mu\end{smallmatrix})$ and for a finite place $v$ we denote $x_{\mu,v}=(\begin{smallmatrix}1 & \\ & (t_\mu)_v\end{smallmatrix})$. For $g\in\GL_2(\A_F)$ we define $\iota_\text{f}(g)$ to equal $g$ at all finite places and the identity at infinite places.

\subsection{Measures}

We normalise the measures so that Vol$(\O_v)=1$ and in a multiplicative setting Vol$(\O_v^\times)=1$. We also normalise so that Vol$(F\bs\AF)=1$.

\subsection{Additive characters}

Let $\psi_\Q:\Q\bs\A\rightarrow\C^\times$ be an additive character defined by $\psi_\Q=\prod_p\psi_{\Q,p}$ where $\psi_{\Q,\infty}(x)=e(x)$ for $x\in\R$ and $\psi_{\Q,p}(x)=1$ for $x\in\Z_p$. We then define an additive character $\psi$ on $F\bs\A_F$ by composing $\psi_\Q$ with the trace map from $F$ to $\Q$, that is, $\psi=\psi_\Q\circ\Tr:F\bs\A_F\rightarrow\C^\times$. If $\psi=\otimes\psi_v$ then the local characters are defined analogously and we have $\psi_\infty(x)=e(\Tr(x))$. Using \cite[Lemma 2.3.1]{RS} we have a formula for computing the value of the exponent of the character $\psi_v$ where $v$ is a non-archimedean place of $F$. For a place $v$ of $F$, letting $c(\psi_v)$ denote the smallest integer $c_v$ such that $\psi_v$ is trivial on $\p_v^{c_v}$, we have by \cite[Lemma 2.3.1]{RS}
\begin{equation}\label{formula}
c(\psi_v)=d_v,
\end{equation} 
where $d_v$ is defined as above, that is, via $\D_v=\varpi_v^{d_v}\O_v$.

\section{The Whittaker Model}

Recall that each infinite dimensional admissible representation of $\GL_2$ over a local field admits a unique Whittaker model see \cite[Chapter 3]{Bump} and \cite[Chapter 4]{DGJH}. Furthermore each cuspidal automorphic representation of $\GL_2$ admits a unique global Whittaker model. We give below the definition and basic properties of the Whittaker model for representation of trivial central character.\\

For any place $v$ of $F$ let $\Pi_v$ be an irreducible admissible infinite dimensional representation of $\GL_2(F_v)$. Let $\mathcal{W}(\psi_v)$ be the space of smooth functions $W_v:\GL_2(F_v)\rightarrow\C$ satisfying \[W_v\Bigg(\begin{pmatrix} 1 & x \\ & 1 \end{pmatrix} g\Bigg)=\psi_v(x)W_v(g), \ \text{for} \ x\in F_v \ \text{and} \ g\in\GL_2(F_v).\] The local Whittaker model, denoted $\mathcal{W}(\Pi_v,\psi_v)$, is the unique subspace of $\mathcal{W}(\psi_v)$ such that the representation of $\GL_2(F_v)$ on the space $\mathcal{W}(\Pi_v,\psi_v)$ (given by right translation) is equivalent to the representation $\Pi_v$.\\

Let $(\Pi,V_\Pi)$ be a cuspidal automorphic representation of $\GL_2(\A_F)$. Then the global Whittaker model, $\mathcal{W}(\Pi,\psi)$, for $\Pi$ with respect to a fixed non-trivial additive character $\psi$, consists of the space generated by functions $W_\phi$ on $\GL_2(\AF)$ given by 
\begin{equation}\label{Def}
W_\phi(g):=\int_{\A_F/F}\phi\Bigg(\begin{pmatrix} 1 & x \\ & 1 \end{pmatrix}g\Bigg)\overline{\psi(x)}\ dx,
\end{equation}
as $\phi$ varies in $V_\Pi$. The representation $\mathcal{W}(\Pi,\psi)$ decomposes as a restricted tensor product of local Whittaker models, $\mathcal{W}(\Pi_v,\psi_v)$.\\

For $\Pi_v$ an irreducible admissible infinite dimensional representation of $\GL_2(F_v)$, with unramified central character, let $a(\Pi_v)$ be the smallest integer $n$ such that $\Pi_v$ has a $K_v(\p_v^n)$-fixed vector. Then the normalised Whittaker newform (with respect to $\psi_v$) is the unique $K_v(\p_v^{a(\Pi_v)})$-invariant vector $W_v\in\mathcal{W}(\Pi_v,\psi_v)$ satisfying $W_v(1)=1$.\\

We now state a key result which will be used throughout this paper.

\begin{Lemma}\label{key}
Let $\Pi_v$ be an irreducible admissible infinite dimensional representation of $\GL_2(F_v)$ with unramified central character. Let $K_v(\p_v^{a(\Pi_v)})$ be defined as in \eqref{Kv}. Let $W_v$ be the local normalised Whittaker newform. Let $\tau\in\Aut(\C)$ and $g\in\GL_2(F_v)$. Then we have $\tau(W_v(g))=W_{{}^{\tau}\Pi_v}(a(\alpha_\tau)g)$ where $\alpha_\tau$ is defined via 
\begin{align*}
\Aut(&\C/\Q) &&\rightarrow && \Gal(\overline{\Q}/\Q) &&\rightarrow &&  \Gal(\Q(\mu_\infty)/\Q)&&\rightarrow  &&\hat{\Z}^\times\cong\Pi_v\Z_v^\times &&\subset && \Pi_v\Pi_{\p|v}\O_\p^\times\\
&\tau &&\mapsto &&  \qquad\tau|_{\overline{\Q}} && \mapsto  &&\qquad\tau|_{\Q(\mu_\infty)} && \mapsto  &&\qquad\alpha_\tau &&\mapsto &&\alpha_\tau=(\alpha_{\tau,\p})_\p.
\end{align*}
\end{Lemma}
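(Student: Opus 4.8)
The plan is to reduce the statement to the two canonical classes of generic representations — the principal series (including their irreducible subquotients, i.e.\ Steinberg twists) and the supercuspidals — and in each case to exploit an explicit description of the normalised Whittaker newform $W_v$ together with its behaviour under the Galois/Galois-via-$\Aut(\C)$ action. The key mechanism is that $\tau$ acts $\tau$-linearly on $\mathcal{W}(\Pi_v,\psi_v)$ in a way that almost intertwines with right translation, but \emph{not quite}: applying $\tau$ to the identity $W_v(n(x)g)=\psi_v(x)W_v(g)$ replaces $\psi_v$ by the conjugate character $\psi_v^{\tau}$, which for the additive character coming from $\mathfrak{D}_v$ differs from $\psi_v$ precisely by precomposition with multiplication by $\alpha_{\tau,v}\in\O_v^\times$ (this is exactly the content of the cyclotomic reciprocity map in the displayed diagram, combined with \eqref{formula}). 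Hence $g\mapsto \tau(W_v(g))$ lies in $\mathcal{W}({}^{\tau}\Pi_v,\psi_v^{\tau})$, and $g\mapsto \tau(W_v(a(\alpha_{\tau,v})^{-1}\,?))$ — after the change of variables absorbing the character twist — lands back in $\mathcal{W}({}^{\tau}\Pi_v,\psi_v)$.

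First I would make precise the $\tau$-semilinear structure: fix the $\tau$-linear isomorphism $t\colon \mathcal{W}(\Pi_v,\psi_v)\to \mathcal{W}({}^{\tau}\Pi_v,\psi_v)$ (this requires knowing that ${}^{\tau}\Pi_v$ again has a Whittaker model with respect to $\psi_v$, which holds since $\psi_v$ is $\tau$-rational up to the $\O_v^\times$-twist) and check that under $t$ the function $W_v$ maps to a vector $W_v'$ satisfying $W_v'(n(x)g)=\psi_v(\alpha_{\tau,v}x)W_v'(g)$; equivalently $g\mapsto W_v'(a(\alpha_{\tau,v})^{-1}g\,a(\alpha_{\tau,v}))$, or more simply $g\mapsto W_v'(a(\alpha_{\tau,v})g)$ up to a harmless central/$\GL_2$-normalisation, is genuinely in $\mathcal{W}({}^{\tau}\Pi_v,\psi_v)$. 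Next I would verify the two normalising properties that pin down the newform uniquely: (i) $K_v(\p_v^{a(\Pi_v)})$-invariance, which transports to $K_v(\p_v^{a({}^{\tau}\Pi_v)})$-invariance because $a(\Pi_v)=a({}^{\tau}\Pi_v)$ (the conductor is Galois-stable) and because conjugation by $a(\alpha_{\tau,v})$ with $\alpha_{\tau,v}\in\O_v^\times$ normalises $K_v(\p_v^n)$; and (ii) the value at the identity, where $\tau(W_v(1))=\tau(1)=1$ matches $W_{{}^{\tau}\Pi_v}(a(\alpha_{\tau,v})\cdot 1)=W_{{}^{\tau}\Pi_v}(a(\alpha_{\tau,v}))$ — here one uses that $a(\alpha_{\tau,v})\in K_v(\p_v^{a({}^{\tau}\Pi_v)})$ so that $W_{{}^{\tau}\Pi_v}(a(\alpha_{\tau,v}))=W_{{}^{\tau}\Pi_v}(1)=1$. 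By uniqueness of the normalised Whittaker newform, the two functions $g\mapsto \tau(W_v(g))$ and $g\mapsto W_{{}^{\tau}\Pi_v}(a(\alpha_{\tau,v})g)$ coincide.

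The main obstacle I anticipate is bookkeeping around the character twist and the precise point at which $\alpha_{\tau,v}$ enters: one must be careful that the cyclotomic character normalisation in the diagram (geometric vs.\ arithmetic Frobenius, and the identification $\hat{\Z}^\times\cong\prod_v\Z_v^\times\subset\prod_v\prod_{\mathfrak{p}\mid v}\O_{\mathfrak{p}}^\times$) is consistent with the convention that $\psi_{\Q,\infty}(x)=e(x)$ and $c(\psi_v)=d_v$, so that the twist is by $\alpha_{\tau,v}$ and not its inverse, and that it is implemented by $a(\alpha_{\tau,v})$ acting on the left rather than the right. A secondary point needing care is that the statement is claimed for \emph{all} infinite-dimensional irreducible admissible $\Pi_v$ with unramified central character, so one should either give a model-independent argument as above (preferred, since it avoids case analysis) or, if one descends to explicit formulas for $W_v$ on the diagonal torus (e.g.\ via the $p$-adic Kirillov model or the explicit values in \cite{RS}), check the supercuspidal case separately where no such closed formula on all of $\GL_2(F_v)$ is available — which is exactly why I would keep the argument at the level of the uniqueness characterisation of the newform rather than explicit matrix coefficients.
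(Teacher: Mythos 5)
The paper does not actually prove Lemma~\ref{key}; it cites \cite[Section 2]{ACAS} and \cite[Section 3.2.3]{HMF}, so your proposal supplies an argument that the paper only references. Your core strategy is correct and is precisely the standard one: identify $g\mapsto\tau(W_v(g))$ and $g\mapsto W_{{}^{\tau}\Pi_v}(a(\alpha_\tau)g)$ as two vectors in the Whittaker model of ${}^{\tau}\Pi_v$ with respect to the twisted character $\psi_v^{\alpha_\tau}(x):=\psi_v(\alpha_\tau x)$, check that both are right-invariant under $K_v(\p_v^{a(\Pi_v)})$ (using $a(\Pi_v)=a({}^{\tau}\Pi_v)$, which holds because the dimensions of fixed spaces transfer through the $\tau$-linear isomorphism, and the fact that $\alpha_\tau\in\O_v^\times$ so the subgroup is unchanged by the unit twist of $\psi_v$), check that both equal $1$ at the identity (here you correctly use $a(\alpha_\tau)\in K_v(\p_v^{a({}^\tau\Pi_v)})$), and conclude by uniqueness of the normalised newvector. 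That is a complete and correct proof.

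Two points to tighten. First, the opening announcement that you would proceed by cases (principal series versus supercuspidal) is a red herring that you rightly abandon; the uniqueness argument is uniform and should simply be stated as such from the start. Second, and more substantively, your intermediate normalisation ``or more simply $g\mapsto W_v'(a(\alpha_{\tau,v})g)$ \dots is genuinely in $\mathcal{W}({}^{\tau}\Pi_v,\psi_v)$'' has the wrong sign: if $W_v'(n(x)g)=\psi_v(\alpha_\tau x)W_v'(g)$ then $g\mapsto W_v'(a(\alpha_\tau)g)$ transforms by $\psi_v^{\alpha_\tau^2}$, not $\psi_v$; the left-translation that lands in $\mathcal{W}({}^{\tau}\Pi_v,\psi_v)$ is $g\mapsto W_v'(a(\alpha_\tau)^{-1}g)$. (Your conjugation version $g\mapsto W_v'(a(\alpha_\tau)^{-1}ga(\alpha_\tau))$ does transform by $\psi_v$, and since $a(\alpha_\tau)$ lies in the compact open subgroup, it agrees with the left-translation version on the nose; but as written the ``more simply'' alternative is incorrect.) Equating $g\mapsto W_v'(a(\alpha_\tau)^{-1}g)$ with $W_{{}^{\tau}\Pi_v}$ and substituting back gives exactly $\tau(W_v(g))=W_{{}^{\tau}\Pi_v}(a(\alpha_\tau)g)$. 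You flag the sign convention as the main obstacle, which shows the right instinct; in a final write-up this computation should be carried out once and cleanly rather than left with ``up to a harmless normalisation.''
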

\begin{proof}
See \cite[Section 2]{ACAS} and \cite[Section 3.2.3]{HMF}.
\end{proof}

Let $\Pi_v$ be an irreducible admissible infinite dimensional representation of $\GL_2(F_v)$ with unramified central character and normalised Whittaker newform $W_v$. Then consider the representation $\Pi_v\otimes| \ |_v^{k_0/2}$, for some integer $k_0$. We now apply the above result to this representation. Therefore using Lemma \ref{key} we have \[\tau\big(W_{\Pi_v\otimes| \ |^{k_0/2}}(g)\big)=W_{{}^{\tau}(\Pi_v\otimes| \ |^{k_0/2})}(a(\alpha_\tau)g).\] This is equivalent to \[\tau\big(W_v(g)\big)\tau\big(|\det(g)|^{k_0/2}_v\big)=W_{{}^{\tau}(\Pi_v\otimes| \ |^{k_0/2})}(a(\alpha_\tau)g).\] So if $\Pi_v\otimes| \ |^{k_0/2}\cong{}^{\tau}(\Pi_v\otimes| \ |^{k_0/2})$, we have that 
\begin{equation}\label{action}
\tau\big(W_v(g)\big)\tau\big(|\det(g)|^{k_0/2}_v\big)=W_v(a(\alpha_\tau)g)|\det(g)|^{k_0/2}_v.
\end{equation}

\section{Modular Forms Case}\label{MFcase}

In this section we study the case of modular forms. The reason we do this is because the proof of the theorem stated below follows a similar method to the one used to prove Theorem \ref{HMFThm} but is less technical. As a consequence of this we are able to prove Thereon \ref{mf2} below, which is an alternative version of \cite[Theorem 4.1]{FEatC} whereby we make some simplifying assumptions on the matrix $\sigma\in\SL_2(\Z)$. The general form of \cite[Theorem 4.1]{FEatC} will follow from applying Theorem \ref{HMFThm} in the setting of classical newforms. 

\subsection{Fourier expansion}

For a function $f:\H\rightarrow\C$ we define a function  $f|_kg$ on $\H$ for an integer $k$ and some $g=\smatrix\in\GL_2^+(\R)$ as $f|_kg(z)=(\det g)^{k/2}(cz+d)^{-k}f(gz)$. Let $N\geq1$. Let $\Gamma$ be a congruence subgroup of $\SL_2(\Z)$. Let $f$ be a modular form on $\Gamma$ of weight $k$. Then we have that $f$ has a Fourier expansion at infinity of the form \[f(z)=\sum_{n\geq0}a_f(n)e^{2\pi i nz/w},\] where $a_f(n)$ are the Fourier coefficients of $f$ and $w$ is the smallest integer such that $(\begin{smallmatrix} 1 & w \\ 0 & 1 \end{smallmatrix})\in\Gamma$. Note that in the case that $\Gamma=\Gamma_0(N)$ we have that $w=1$. If the first Fourier coefficient of $f|_kg$ is zero for every $g\in\SL_2(\Z)$ then we say that $f$ is a cuspform. If we also have that $a_f(1)=1$ we say that $f$ is normalised. Recall that if $f$ is a modular form for $\Gamma_0(N)$ which is non-zero we have that $k$ is even.\\

Let $\sigma\in\SL_2(\Z)$ be such that $\a=\sigma\infty$, where $\a$ is a cusp of $\Gamma_0(N)\bs\H$. If $f$ is a modular form for $\Gamma_0(N)$ then $f|_k\sigma$ is a modular form for the congruence subgroup $\sigma^{-1}\Gamma_0(N)\sigma$. We have that $f|_k\sigma$ has a Fourier expansion of the form \[f|_k\sigma(z)=\sum_{n>0}a_f(n;\sigma)e^{2\pi inz/w(\a)},\] where $a_f(n;\sigma)$ are the Fourier coefficients of $f|_k\sigma$ and $w(\a)$ is the width of the cusp. The width of the cusp is defined to be the integer $N/(L^2,N)$ where $L$ is the denominator of the cusp $\a$.

\subsection{Result for modular forms}

\begin{Thm}\label{mf2}
Let $f$ be a normalised newform on $\Gamma_0(N)$ with weight $k$. Let $\Q(f)$ be the field generated by all the Fourier coefficients of $f$. Let $\a=\sigma\infty$ be a cusp of $\Gamma_0(N)\bs\H$ with $\sigma=(\begin{smallmatrix} a & b \\ L & d \end{smallmatrix})\in\sl2Z, \ L|N, \ (a,N)=(d,N)=1$, so that $\a$ corresponds to the cusp $a/L$. Then the Fourier coefficients of $f|_k \sigma$ lie in the cyclotomic extension $\Q(f)(\zeta_{N/L})$.
\end{Thm}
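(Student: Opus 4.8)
The plan is to realize the classical newform $f$ as an adelic automorphic form $\phi_f$ on $\GL_2(\A)$, generating a cuspidal automorphic representation $\Pi = \otimes_p \Pi_p$ of trivial central character, and to express the Fourier coefficients $a_f(n;\sigma)$ of $f|_k\sigma$ as an essentially finite product of local Whittaker newform values, following the explicit formula of \cite[Section 3]{ACAS}. Concretely, after absorbing the weight by passing to $\Pi \otimes |\ |^{k_0/2}$ (here one may take $k_0 = k$), there is a formula of the shape
\[
a_f(n;\sigma) = C \cdot n^{k/2-1} \prod_{p} W_p\bigl(g_p\bigr),
\]
where $C$ is an explicit nonzero rational (or $\Q(f)$-rational) constant coming from normalizations, $W_p$ is the normalized local Whittaker newform of $\Pi_p$ with respect to $\psi_{\Q,p}$, and $g_p \in \GL_2(\Q_p)$ is a product of $a(n)$, the diagonal embedding of $\sigma$, and the archimedean data; at almost all $p$ the factor $W_p(g_p)$ equals $1$ because $g_p \in K_p(\p^{a(\Pi_p)})$. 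First I would record this formula precisely (this is the analogue of \cite[Lemma 3.1]{ACAS}), check the normalization constant lies in $\Q(f)$, and identify for which primes $p$ the local factor is possibly nontrivial: these are the primes dividing $N$, together with the primes appearing in the entry $L$ of $\sigma$ and in $d$ — but the hypotheses $L \mid N$ and $(d,N) = 1$ are designed exactly so that the only relevant primes are those dividing $N$, and the "extra" twisting by $\zeta$'s is governed by the valuation of $L$ at primes dividing $N$.

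Next I would let $\tau \in \Aut(\C)$ fixing $\Q(f)(\zeta_{N/L})$ and show $\tau$ fixes each $a_f(n;\sigma)$. Applying $\tau$ to the product formula and using \eqref{action}, for each $p$ one gets
\[
\tau\bigl(W_p(g_p)\bigr)\,\tau\bigl(|\det g_p|_p^{k_0/2}\bigr) = W_p\bigl(a(\alpha_{\tau,p}) g_p\bigr)\,|\det g_p|_p^{k_0/2},
\]
valid because a newform $\Pi$ of trivial central character satisfies $\Pi_p \otimes |\ |^{k_0/2} \cong {}^\tau(\Pi_p \otimes |\ |^{k_0/2})$ (the Fourier coefficients of $f$ itself lie in $\Q(f)$, which is $\tau$-fixed, forcing ${}^\tau\Pi_p \cong \Pi_p$). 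The key local computation is then: at $p \nmid N$, $g_p$ lies in the newform's fixed subgroup so $W_p(a(\alpha_{\tau,p})g_p) = W_p(g_p)$ trivially; at $p \mid N$, I would use the explicit description of $W_p$ on the relevant double cosets (ramified principal series, Steinberg-type, or supercuspidal cases), together with the transformation under the left factor $a(\alpha_{\tau,p}) = (\begin{smallmatrix}\alpha_{\tau,p} & \\ & 1\end{smallmatrix})$, to show that $W_p(a(\alpha_{\tau,p})g_p)$ differs from $W_p(g_p)$ by a root of unity whose order divides $p^{v_p(N/L)}$ — precisely because $g_p$ carries the data of $a(n)$ and of $\sigma = \smatrix$ with $v_p(L) \le v_p(N)$. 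Collecting these contributions across $p\mid N$ gives a root of unity of order dividing $N/L$, which together with the adjustment from the $\tau(|\det|^{k_0/2})$ factors (a $\sqrt{\phantom{n}}$-power of a rational, controlled since $k$ is even and $n^{k/2}\in\Q$) is fixed by $\tau$ by hypothesis; hence $\tau(a_f(n;\sigma)) = a_f(n;\sigma)$.

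I expect the main obstacle to be the local analysis at primes $p \mid N$ where $p \mid L$: one must pin down exactly how left-translation by $a(\alpha_{\tau,p})$ interacts with the coset representative $g_p$ encoding $\sigma$, and extract the precise root of unity. This requires either an explicit formula for the Whittaker newform on the needed cosets (available for principal series and for twists of Steinberg, and handled case-by-case for supercuspidals via the local Galois action on the newform) or an abstract argument using $K_p(\p^{a(\Pi_p)})$-invariance after conjugating. The cusp-width bookkeeping — matching $w(\a) = N/(L^2,N)$ on the classical side with the adelic normalization — is routine but must be done carefully so that the final field is $\Q(f)(\zeta_{N/L})$ and not a larger cyclotomic extension; here the hypotheses $(a,N) = (d,N) = 1$ ensure no spurious ramification enters through the diagonal entries of $\sigma$. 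Everything else (rationality of the normalizing constant $C$, the almost-everywhere triviality of local factors, the representation-theoretic isomorphism ${}^\tau\Pi_p \cong \Pi_p$) is standard given the results quoted above.
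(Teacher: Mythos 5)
Your overall framework matches the paper's: adelize $f$, invoke the explicit formula of \cite[Section~3]{ACAS} writing $a_f(n;\sigma)$ as a $\Q(f)$-rational prefactor times $\prod_{p\mid N}W_p(a(n/\delta(\a))\sigma^{-1})$, and show that a $\tau\in\Aut(\C)$ fixing $\Q(f)(\zeta_{N/L})$ fixes each factor via Lemma~\ref{key}. Where the proposal falls short is precisely the step you yourself flag as the main obstacle, and your primary suggested route for it is the wrong one. Splitting into cases by the isomorphism type of $\Pi_p$ (ramified principal series, Steinberg, supercuspidal) and extracting roots of unity from explicit Whittaker formulas is unnecessary and would be genuinely painful for supercuspidals. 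The paper uses exactly your alternative --- the ``abstract argument using $K_p(\p^{a(\Pi_p)})$-invariance after conjugating'' --- and it is a one-line matrix computation that is uniform across representation types. Since $a(\alpha_\tau)$ and $a(n/\delta(\a))$ are diagonal they commute, so
\[
a(\alpha_\tau)\,a(n/\delta(\a))\,\sigma^{-1}=a(n/\delta(\a))\,\sigma^{-1}\cdot\bigl(\sigma\, a(\alpha_\tau)\,\sigma^{-1}\bigr),
\]
and one only needs $\sigma\,a(\alpha_\tau)\,\sigma^{-1}\in K_p(p^{n_p})$ for each $p\mid N$. Writing $n_p=v_p(N)$, $l_p=v_p(L)$, the hypothesis that $\tau$ fixes $\zeta_{N/L}$ gives $\alpha_\tau\equiv1\pmod{p^{n_p-l_p}}$; then with $\sigma=\left(\begin{smallmatrix}a&b\\L&d\end{smallmatrix}\right)$ one computes
\[
\sigma\begin{pmatrix}\alpha_\tau&0\\0&1\end{pmatrix}\sigma^{-1}=\begin{pmatrix}ad\alpha_\tau-bL&ab(1-\alpha_\tau)\\dL(\alpha_\tau-1)&ad-bL\alpha_\tau\end{pmatrix},
\]
whose lower-left entry has $v_p\ge l_p+(n_p-l_p)=n_p$ (here $(d,N)=1$ is used), whose entries lie in $\Z_p$, and whose determinant is $\alpha_\tau\in\Z_p^\times$; this forces the matrix into $K_p(p^{n_p})$. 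Right $K_p(p^{n_p})$-invariance of $W_p$ then gives $W_p(a(\alpha_\tau)a(n/\delta(\a))\sigma^{-1})=W_p(a(n/\delta(\a))\sigma^{-1})$ on the nose. In particular, your framing that $W_p(a(\alpha_{\tau,p})g_p)$ ``differs from $W_p(g_p)$ by a root of unity whose order divides $p^{v_p(N/L)}$'' is not quite the right statement: once $\alpha_\tau\equiv1\pmod{p^{n_p-l_p}}$, the local value is fixed exactly, with no residual root-of-unity factor to collect. With that local step filled in the way the paper does it, the remaining ingredients you list (the $\Q(f)$-rationality of the prefactor since $k$ is even and $n/\delta(\a)\in\Q$, $\tau$-invariance of $a_f(n_0)$, and ${}^{\tau}\Pi_p\cong\Pi_p$ from $\tau$ fixing $\Q(f)$) are correct, and the argument closes.
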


\begin{Remark}
Recall that to show a complex number $\rho\in\C$ is an element of a number field it is equivalent to showing that $\rho$ is fixed by the Galois group of that number field.
\end{Remark}

\begin{proof}
Let $\tau\in\Aut(\C)$ fix $\Q(f)(\zeta_{N/L})$. So in particular $\tau$ fixes $\Q(f)$ and all the $\zeta_{N/L}$ roots of unity. \\

Let $n=n_0\prod_{p|N}p^{n_p}$ with $(n_0,N)=1$. In the proof of Proposition 3.3 in \cite{ACAS} the authors give an explicit formula for $a_f(n,\sigma)$, given by \[a_f(n,\sigma)=\frac{a_f(n_0)}{n^{k/2}}\Big(\frac{n}{\delta(\a)}\Big)^{k/2}\prod_{p|N}W_p (a(n/\delta(\a))\sigma^{-1}),\] where $\delta(\a)$ is a rational number related to the cusp $\a$ and $W_p$ are the local Whittaker newforms associated to $f$. Note that $\frac{n}{\delta(\a)}\in\Q$ and since $k$ is even we have that the second term is automatically fixed by $\tau$. Since $\tau$ fixes $\Q(f)$ we have that $\tau$ fixes $a_f(n_0)$. \\

Since $\tau$ fixes $\zeta_{N/L}$ we have that $\alpha_\tau\equiv1 \pmod{p^{n_p-l_p}}$. This implies \[\sigma\begin{pmatrix} \alpha_\tau & 0 \\ 0 & 1\end{pmatrix}\sigma^{-1}=\begin{pmatrix}ad\alpha_\tau-bc & ab(1-\alpha_\tau) \\ dL(\alpha_\tau-1) & ad-bL\alpha_\tau\end{pmatrix}\in K_p(p^{n_p}).\] Since $\tau$ fixes $\Q(f)$ we have that $\pi_p={}^{\tau}\pi_p$. Therefore using Lemma \ref{key} we have that \[\tau(W_p(a(n/\delta(\a))\sigma^{-1}))=W_p(a(\alpha_\tau)a(n/\delta(\a))\sigma^{-1}).\] Recall that $W_p$ is right invariant by $K_p(p^{n_p})$. Thus from the condition that $\sigma a(\alpha_\tau)\sigma^{-1}\in K_p(p^{n_p})$ we have that  \[W_p(a(n/\delta(\a))\sigma^{-1})=W_p(a(\alpha_\tau)a(n/\delta(\a))\sigma^{-1}).\] Thus $\tau$ fixes $\af$ and so $\af\in\Q(f)(\zeta_{N/L})$.
\end{proof}

\section{Preliminary Results}\label{Prelim}

In this section we define the classical Hilbert newform and how we can reinterpret this definition in the adelic setting. We then prove some general results needed to prove Theorem \ref{HMFThm}. Specifically we show that the Fourier coefficients of Hilbert newform equals the global Whittaker newform. 

\subsection{Hilbert Modular Forms}\label{Setup}

We define \[\H^n=\{z=(z_1,...,z_n):\Ima(z_j)>0, \forall j=1,...,n\},\] where we write $z=x+iy$ to mean for $z=(z_1,...,z_n)$ and $z_j=x_j+iy_j$ for each $j$. We have that $\GL_2(F)$ acts on $\H^n$ in the following way \[g\cdot z=\Bigg(\frac{\eta_1(a_1)z_1+\eta_1(b_1)}{\eta_1(c_1)z_1+\eta_1(d_1)},...,\frac{\eta_n(a_n)z_n+\eta_n(b_n)}{\eta_n(c_n)z_n+\eta_n(d_n)}\Bigg).\] We define the congruence subgroup $\congsub$ of $\GL_2(F)$ for each $\mu$ as 
\begin{equation}\label{congsub}
\congsub=x_\mu(\GL_2^+(F_\infty)K_0(\n))x_\mu^{-1}\cap\GL_2(F),
\end{equation}
where 
\begin{equation}\label{K0}
K_0(\n)=\prod_{v<\infty}K_v(\n).
\end{equation}

Then $f_\mu:\H^n\rightarrow\C$ is a Hilbert modular form of level $\congsub$ and weight $k=(k_1,...,k_n)$ if $f_\mu$ is holomorphic on $\H^n$ and at cusps and \[f_\mu||_k\alpha(z):=\det\alpha^{k/2}(cz+d)^{-k}f_\mu(\alpha\cdot z)=f_\mu(z),\] for every $\alpha\in\congsub$. 

\begin{Remark}
One can also consider this definition with a character $\chi$ on $\A_F^\times/F^\times$, which we will assume is trivial. For a treatment of $\chi$ being non-trivial see \cite[Section 4]{HMF} and the references therein.  
\end{Remark}

If the constant term in the Fourier expansion of $f_\mu||_k\gamma$ is zero for every $\gamma\in\GL_2^+(F)$ then we say $f_\mu$ is a cuspform. For the space of cuspforms to be nonempty we require that $k_j\geq1$, see \cite[Section 1.7]{HolHMF} for a proof of this fact. In this case $f_\mu$ has a Fourier expansion at infinity of the form \[f_\mu(z)=\sum_{\xi\in(t_\mu\O_F)_+}a_\mu(\xi)e^{2\pi i \text{Tr}(\xi z)}.\] The space of cuspforms of level $\congsub$ and weight $k$ is denoted $S_k(\congsub)$. For each $\mu$ choose a cuspform in $S_k(\congsub)$ and put $\f=(f_1,...,f_h)$.  Then we say that $\f$ is a cuspform of level $\n$ and weight $k=(k_1,...,k_n)$. The space of all such $\f$ is denoted by $S_k(\n)$ and we have \[S_k(\n)=\bigoplus_{\mu=1}^h S_k(\congsub).\] An element $\f\in S_k(\n)$ is said to be a cuspidal classical Hilbert newform if it lies in the orthogonal complement of the oldspace and is a common eigenfunction for the Hecke operators at almost all primes, see \cite[Section 2]{Shi78}.

\subsection{Classical and adelic correspondence}\label{dictionary}

Here we give a description on how one can go between classical Hilbert newforms and adelic newforms (as in Section \ref{Setup} we only consider the case of trivial character $\chi$). We will give an outline of how this description is constructed for a full discussion see \cite[Chapter 3]{HolHMF} and \cite[Section 4]{HMF}. Let $A_k(\n)$ be the subspace of $A_\text{cusp}(\GL_2(F)\bs\GL_2(\A_F))$ that consists of functions such that
\begin{enumerate}
\item[(i)] $\phi(gr(\theta))=e^{-ik\theta}\phi(g)$, where $r(\theta)=\Big\{\begin{pmatrix} \cos\theta_j & -\sin\theta_j \\ \sin\theta_j & \cos\theta_j \end{pmatrix}\Big\}_j\in \text{SO}(2)^n$.
\item[(ii)] $\phi(gk_0)=\phi(g)$, where $k_0\in K_0(\n)$.
\item[(iii)] $\phi$ is an eigenfunction of the Casimir element $\Delta:=(\Delta_1,...,\Delta_n)$ as a function of $\GL_2(\R)^n$ with its eigenvalue $\lambda=\prod_{j=1}^n\frac{k_j}{2}(1-\frac{k_j}{2})$.
\end{enumerate}
Then $A_k(\n)$ is isomorphic to $S_k(\n)$. To see this we define, for each $\f=(f_1,...,f_h)\in S_k(\n)$
\begin{equation}\label{CtoA}
\phi(\gamma x_\mu g_\infty k_0)=f_\mu||_kg_\infty(\mathbf{i})
\end{equation}
where $\gamma\in\GL_2(F), g_\infty\in\GL_2^+(F_\infty), k_0\in K_0(\n)$ and $\mathbf{i}=(i,...,i)$. This is well defined because of (\ref{congsub}) and (\ref{K0}). One can also check that $\phi$ as above gives an element of $A_k(\n)$. Conversely, given $\phi\in A_k(\n)$ define \[f_\mu(z)=y^{-k/2}\phi(x_\mu g_z),\] where $g_z\mathbf{i}=z$. Then we have that $\f:=(f_1,...,f_h)\in S_k(\n)$.\\

We say that $\phi\in A_k(\n)$ is an adelic newform if it generates an irreducible cuspidal automorphic representation $\Pi=\Pi_\phi$ of $\GL_2(\AF)$ of conductor $\n$. The set of adelic newforms in $A_k(\n)$ is in bijective correspondence with the set of classical cuspidal Hilbert newforms $f\in S_k(\n)$. Precisely given $\phi$ as above it corresponds to a classical cuspidal Hilbert newform $\f=(f_1,...,f_h)$ of level $\n$ and weight $k$ defined via $f_\mu(z)=y^{-k/2}\phi(x_\mu g_z)$.\\

Given an adelic newform $\phi_\f$ attached to $\f\in S_k(\n)$, the automorphic representation of $\GL_2(\AF)$ generated by it is known to be irreducible, see for example \cite[Theorem 4.7]{HMF}. The following lemma gives an explicit relation between the action of $\Aut(\C)$ on newforms and representations. 

\begin{Lemma}\label{relation}
Let $\f=(f_1,...,f_h)$ be a cuspidal classical Hilbert newform. Given $\tau\in\Aut(\C)$ let $\f^\tau$ be defined as in \cite[Proposition 2.6]{Shi78}. Let $\Pi$ the automorphic representation generated by $\phi_\f$ and $\Pi'$ be the automorphic representation generated by $\phi_{\f^\tau}$. Then for each finite place $v$ we have \[\Pi'_v\otimes|\ |_v^{k_0/2}\cong{}^{\tau}(\Pi_v\otimes|\ |_v^{k_0/2}).\]
\end{Lemma}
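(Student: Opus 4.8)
The plan is to reduce the statement to a purely local comparison of representations and then to invoke the known compatibility of the classical $\tau$-action on Hilbert newforms (as defined in \cite[Proposition 2.6]{Shi78}) with the $\tau$-action on automorphic representations. First I would recall that $\Pi$ decomposes as a restricted tensor product $\otimes_v \Pi_v$ and similarly $\Pi' = \otimes_v \Pi'_v$, so it suffices to establish the isomorphism at each finite place $v$ separately. The key point is that both $\Pi_v \otimes |\ |_v^{k_0/2}$ and $\Pi'_v \otimes |\ |_v^{k_0/2}$ are unitarily normalised (the twist by $|\ |_v^{k_0/2}$ is precisely what makes the central character unitary, given $k_1 \equiv \dots \equiv k_n \pmod 2$), and for such representations there is a clean dictionary between Hecke eigenvalues and the representation. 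So the strategy is: compute the Hecke eigenvalues (equivalently, the Satake parameters at unramified $v$, and the relevant local data at ramified $v$) of $\Pi'_v \otimes |\ |_v^{k_0/2}$ in terms of those of $\Pi_v \otimes |\ |_v^{k_0/2}$, and show they match those of ${}^\tau(\Pi_v \otimes |\ |_v^{k_0/2})$.

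The main steps I would carry out are as follows. Step one: at a finite place $v \nmid \n$, the representation $\Pi_v$ is an unramified principal series determined by its Satake parameters, which are in turn determined by the normalised Hecke eigenvalue $c_\mu(\varpi_v; \f)$ (via Shimura's normalisation $c_\mu$). By the definition of $\f^\tau$ in \cite[Proposition 2.6]{Shi78}, the normalised Fourier/Hecke data of $\f^\tau$ is obtained by applying $\tau$ to that of $\f$; hence the Satake parameters of $\Pi'_v$ are the $\tau$-images of those of $\Pi_v$, after accounting for the $|\ |_v^{k_0/2}$ twist. On the other hand, ${}^\tau(\Pi_v \otimes |\ |_v^{k_0/2})$ is by definition the representation whose matrix coefficients (here, whose Whittaker newform values, or equivalently Satake parameters) are the $\tau$-images — this is exactly the content of Lemma \ref{key} applied to the newform vector, since $W_{{}^\tau\Pi_v}$ records the $\tau$-twisted representation. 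Matching these gives the unramified places. Step two: at $v \mid \n$, one argues that $\f$ and $\f^\tau$ have the same level $\n$ (newforms are taken to newforms of the same conductor under the $\tau$-action, cf. \cite[Proposition 2.6]{Shi78}), so $a(\Pi_v) = a(\Pi'_v)$; then the local components are pinned down by their conductor together with the $\tau$-twisted Whittaker newform values via Lemma \ref{key}, giving the isomorphism at ramified places too. Combining the two steps across all finite $v$ yields the claim.

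The hard part will be Step two: handling the ramified places cleanly. At unramified places everything is governed by a single Satake parameter and the argument is essentially formal once the normalisations are fixed, but at $v \mid \n$ one cannot simply read off the representation from one Hecke eigenvalue. The cleanest route is probably to avoid classifying $\Pi_v$ entirely and instead argue representation-theoretically: use that the global $\tau$-action on $A_k(\n)$ (transported from the classical $\f \mapsto \f^\tau$ via the dictionary of Section \ref{dictionary}) sends $\phi_\f$ to a vector generating ${}^\tau\Pi$ up to the central-character twist, so that $\Pi' \cong {}^\tau\Pi \otimes |\det|^{-k_0/2} \circ (\text{something})$ globally, and then restrict to each place. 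Concretely, I expect to invoke a result from \cite{HMF} or \cite{Shi78} identifying $\Pi_{\f^\tau}$ with ${}^\tau(\Pi_\f)$ after the appropriate unitary normalisation; the role of the $|\ |_v^{k_0/2}$ twist is exactly to convert the arithmetically normalised (weight-$k$) representation into a unitary one on which the abstract $\tau$-action and Lemma \ref{key} apply verbatim. So I would structure the proof as: (a) fix normalisations and state the twist makes the central character unitary; (b) cite/derive the global identity $\Pi_{\f^\tau, v}^{\mathrm{unit}} \cong {}^\tau(\Pi_{\f,v}^{\mathrm{unit}})$; (c) unwind the twist to get the displayed formula. The subtlety to watch is keeping track of whether $\tau$ acts on $|\ |_v^{k_0/2}$ trivially — it does, since $|\ |_v$ takes rational (indeed $v$-power) values — which is what lets the twist pass through the $\tau$ on the right-hand side.
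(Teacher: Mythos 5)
The paper's own proof of Lemma~\ref{relation} consists of a single citation to \cite[Theorem 4.19]{HMF}, which is precisely step (b) of your plan, so you have landed on the same approach. Your additional sketch is broadly consistent with what that reference proves, and your observation that $\tau$ acts trivially on $|\ |_v^{k_0/2}$ (since $|\varpi_v|_v = q_v^{-1} \in \Q$) is exactly the point that lets the twist pass through ${}^\tau(\,\cdot\,)$. One caveat worth flagging: your proposed ramified-place argument --- ``the local components are pinned down by their conductor together with the $\tau$-twisted Whittaker newform values via Lemma \ref{key}'' --- is circular as stated, because Lemma~\ref{key} tells you about the Whittaker newform of ${}^\tau\Pi_v$, not of $\Pi'_v$; relating the two requires the global Fourier-coefficient-to-Whittaker dictionary (the content of Proposition~\ref{prop} and the $q$-expansion comparison implicit in Shimura's $\f \mapsto \f^\tau$), which is exactly what \cite[Theorem 4.19]{HMF} packages up. You anticipate this yourself and correctly identify the citation as the cleanest route, so the proposal is sound; just be aware that the ``sketch'' at ramified $v$ is not a shortcut around the reference but a restatement of what it proves.
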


\begin{proof}
See \cite[Theorem 4.19]{HMF}.
\end{proof}

\subsection{Whittaker expansion}

We now give the explicit relation between the Fourier coefficients of a classical Hilbert newform at a cusp and the Whittaker newform. We start with a general result which holds for a larger class of Hilbert automorphic forms.

\begin{Lemma}\label{general}
Let $M$ be an ideal of $\O_F$ with localisation $M_v=\varpi_v^{m_v}\O_v$ for every finite place $v$ such that for almost all $v$ we have that $m_v=0$.  Let $\phi$ be a Hilbert automorphic form in the space of $\Pi$ such that $\phi$ is right invariant by $\big(\begin{smallmatrix} 1 & \varpi_v^{m_v}\mathcal{O}_v \\0 & 1\end{smallmatrix}\big)$ for every finite place and satisfies (i) and (iii) in Section \ref{dictionary}. Let $h$ be a holomorphic function on $\mathbb{H}^n$ defined by the equation \[h(z)=j(g_z,\mathbf{i})^k\phi(g_z).\] Then $h$ has a Fourier expansion of the form 
\begin{equation}\label{FE}
h(z)=\sum_{\xi\in M^{-1}\mathfrak{D}_F^{-1}}a_h(\xi)e^{2\pi i \Tr(\xi z)}.
\end{equation}
 Moreover \[W_\phi(a(\xi)g_z)=
\begin{cases}
y^{k/2}a_h(\xi)e^{2\pi i\text{Tr}(\xi z)}, \ \text{if} \ \xi\in M^{-1}\mathfrak{D}_F^{-1}\\
0, \ \text{otherwise}.
\end{cases}\] 
\end{Lemma}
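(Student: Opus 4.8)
The plan is to compute the Whittaker function $W_\phi$ directly from its defining integral \eqref{Def} and read off the Fourier expansion of $h$ from it. First I would restrict attention to $g = a(\xi) g_z$ and use the fact that $\phi$ is left-invariant under $\GL_2(F)$, in particular under the unipotent subgroup $\{n(u) : u \in F\}$, together with the right-invariance hypothesis at the finite places and condition (i) at the archimedean places. The key observation is that for the additive character $\psi = \psi_\Q \circ \Tr$, the conductor computation \eqref{formula} gives $c(\psi_v) = d_v$, so $\psi$ restricted to $F_v$ is trivial exactly on $\p_v^{d_v}$; combined with the right-invariance of $\phi$ under $n(\varpi_v^{m_v}\O_v)$ at finite places, the local integrals defining $W_\phi$ force a support condition. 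Unwinding, one finds that $W_\phi(a(\xi) g_z) = 0$ unless $\xi \in M^{-1}\D_F^{-1}$, since outside this fractional ideal there is a place $v$ where $\psi_v(\xi \cdot \varpi_v^{m_v}\O_v)$ is a non-trivial character of a group over which $\phi$ is invariant, killing the integral.

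Next I would handle the nonzero case. When $\xi \in M^{-1}\D_F^{-1}$, I would expand $\phi$ along $N(F)\bs N(\A_F)$ using the standard Fourier expansion of a cusp form on $\GL_2$: because $\phi$ is cuspidal, $\phi(g) = \sum_{\alpha \in F^\times} W_\phi(a(\alpha) g)$, and by the equivariance properties $W_\phi(a(\alpha) g_z)$ factors through a product of local terms. At the archimedean places, condition (i) (the $\mathrm{SO}(2)^n$-equivariance by $e^{-ik\theta}$) together with the Casimir eigenvalue condition (iii) pins down the archimedean Whittaker function up to scalar to be the one coming from the holomorphic discrete series, namely proportional to $y^{k/2} e^{2\pi i \Tr(\xi z)}$ when $\xi$ is totally positive and $0$ otherwise — this is the usual computation that the lowest-weight vector in the Whittaker model of the weight-$k_j$ discrete series of $\GL_2(\R)$ is $(2\pi i)$-normalised $y^{k_j/2} e^{2\pi i \xi_j z_j}$. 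At the finite places, the right-$K$-invariance makes the local Whittaker value locally constant, and assembling everything gives $W_\phi(a(\xi) g_z) = y^{k/2} a_h(\xi) e^{2\pi i \Tr(\xi z)}$ where $a_h(\xi)$ is by definition the coefficient collecting the finite-place contributions. Then the Fourier expansion \eqref{FE} of $h(z) = j(g_z,\mathbf{i})^k \phi(g_z)$ follows by substituting the cuspidal expansion of $\phi$ and using $j(g_z,\mathbf{i})^k = y^{-k/2}$ (up to the standard phase), so that $h(z) = \sum_{\xi} a_h(\xi) e^{2\pi i \Tr(\xi z)}$ with the sum running over $M^{-1}\D_F^{-1}$ by the support result, and over totally positive $\xi$ once holomorphy at the cusps is imposed.

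The main obstacle, I expect, is the careful bookkeeping at the non-archimedean places: matching the normalisation of the additive character $\psi_v$ (with conductor exponent $d_v$ coming from the different) against the invariance ideal $\varpi_v^{m_v}\O_v$ to get the support condition $\xi \in M^{-1}\D_F^{-1}$ exactly right, rather than off by a different-twist. One must be attentive that $h$ is defined with $M$ but the character has conductor tied to $\D_F$, so the fractional ideal that appears is genuinely $M^{-1}\D_F^{-1}$ and not $M^{-1}$; this is precisely where \eqref{formula} is used. A secondary technical point is justifying the interchange of the adelic integral defining $W_\phi$ with the Fourier expansion of $\phi$, and confirming that the resulting $a_h(\xi)$ is independent of the choice of $g_z$ with $g_z \mathbf{i} = z$ — this follows from the $\mathrm{SO}(2)^n$-equivariance (i), but should be noted. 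Once these local normalisations are settled, the archimedean computation is classical and the global assembly is formal.
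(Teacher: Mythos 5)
Your first paragraph tracks the paper's opening moves: compute $W_\phi(a(\xi)g_z)$ from the defining integral \eqref{Def}, use the left-$F$-invariance of $\phi$ to move $a(\xi)$ past the unipotent, split $\A_F/F$ via strong approximation into $F_\infty/M \times \prod_v \varpi_v^{m_v}\O_v$ using the right-invariance hypothesis, and invoke the conductor formula \eqref{formula} to see that the product of non-archimedean character integrals $\prod_{v<\infty}\int_{\O_v}\psi_v(-\xi\varpi_v^{m_v}x_v)\,dx_v$ is the indicator of $\xi\in M^{-1}\D_F^{-1}$. That is exactly how the paper obtains the support condition.

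Your second paragraph, however, departs from the paper and introduces machinery the paper deliberately avoids at this stage. You propose to start from the cuspidal Whittaker expansion $\phi(g)=\sum_{\alpha\in F^\times}W_\phi(a(\alpha)g)$, factor $W_\phi(a(\alpha)g_z)$ into an archimedean piece times a finite piece, and then plug in the explicit holomorphic discrete series Whittaker function $W_\infty(a(\xi)g_z)\propto(\xi y)^{k/2}e^{2\pi i\Tr(\xi z)}$. The paper instead works bottom-up and never factors $W_\phi$ nor invokes \eqref{Winfty} in the proof of this lemma: it first observes directly from (i), (iii), and the right-invariance that $h$ is holomorphic and $M$-periodic, so its Fourier expansion is supported on $M^{-1}\D_F^{-1}$; it then identifies the surviving archimedean piece of the split integral, $\int_{F_\infty/M}\phi(n(x_\infty)a(y))\psi_\infty(-\xi x_\infty)\,dx_\infty = \int_{F_\infty/M}y^{k/2}h(x_\infty+iy)e(-\Tr(\xi x_\infty))\,dx_\infty$, as classical Fourier inversion picking out $y^{k/2}a_h(\xi)e^{-2\pi\Tr(\xi y)}$. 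The archimedean Whittaker formula \eqref{Winfty} is only called upon later, in Section~\ref{MR}.

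Your route is workable but it buys you nothing here and costs two extra inputs that should at least be acknowledged. First, the factorisation $W_\phi(a(\alpha)g_z)=W_\infty(\cdot)\,W_{\mathrm{fin}}(\cdot)$ requires $\phi$ to be an archimedean-times-finite pure tensor; the paper's integral computation places no such requirement. Because conditions (i) and (iii) pin down the archimedean component to a one-dimensional lowest-weight line in a holomorphic discrete series, a $\phi$ satisfying them \emph{is} automatically such a tensor, but that needs to be said, and the more elementary route makes it unnecessary. Second, the explicit archimedean Whittaker function is a nontrivial input (the paper cites Saha for it) and is simply not needed for this lemma: the paper extracts the archimedean factor by substituting the definition of $h$ into the integral, not by identifying a local Whittaker model. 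Your proposal is essentially correct modulo these omitted justifications, but it is more roundabout than the paper's self-contained computation.
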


\begin{proof}
From the fact that $\phi$ satisfies (i) and (iii) in Section \ref{dictionary}, it follows that $h$ is holomorphic on $\H^n$ and $h(z+m)=h(z)$, for every $m\in M$. This shows that \[f(z)=\sum_{\xi\in F} a_h(\xi)e^{2\pi i \Tr(\xi z)},\] for some complex numbers $a_h(\xi)$.\\

Suppose $a_h(\xi)\neq0$. From the property that $h$ is invariant by $M$ we have that $\Tr(\xi m)\in\Z$ for every $m\in M$. This property is equivalent to that of $\xi\in M^{-1}\mathfrak{D}_F^{-1}$. To see this note that for every place $v<\infty$ we have that $\Tr(\xi_vm_v)\in\Z_v$, for every $m_v\in M_v$. Thus $\Tr((\varpi_v^{m_v}\xi_v)n_v)\in\Z_v$ for every $n_v\in\O_v$. This is equivalent to the following \[\varpi_v^{m_v}\xi_v\in\mathfrak{D}_v^{-1}\Longleftrightarrow \xi_v\in\varpi_v^{-m_v}\mathfrak{D}_v^{-1}\Longleftrightarrow\xi\in M^{-1}\mathfrak{D}_F^{-1}.\] This shows that $h$ has a Fourier expansion of the form (\ref{FE}). Next let $\xi\in F^\times$ and let $z=x+iy\in\H^n$. Then using the definition of $W_\phi$, (\ref{Def}), we have that 
\begin{align*} 
W_\phi(a(\xi)g_z)&=\int_{\AF/F}\phi\big(n(x')a(\xi)g_z\big)\overline{\psi(x')} \ dx'\\
&=\int_{\AF/F} \phi\big(a(\xi)n(\xi^{-1}x')g_z)\psi(-x') \ dx'\\
&=\int_{\AF/F} \phi\big(n(\xi^{-1}x')g_z)\psi(-x') \ dx', \\
&=\int_{\AF/F} \phi\big(n(x')g_z)\psi(-\xi x') \ dx', \\
&=\int_{\AF/F} \phi(n(x'+x)a(y))\psi(-\xi x') \ dx',\\
&=\int_{\AF/F} \phi(n(x')a(y))\psi(-\xi x)\psi(\xi x) \ dx',\\
&=e(\Tr(\xi x))\int_{\AF/F} \phi\big(n(x')a(y))\psi(-\xi x') \ dx'.
\end{align*}
We denote the integral in the last line by $I_\phi(y,\xi)$. We now want to split this integral into the product of two integrals, using the fundamental domain \[\AF/F=F_\infty/M\times\prod_{v<\infty}\varpi_v^{m_v}\O_v,\] using strong approximation. Therefore using $y\in F_\infty$ and the right invariance of $\phi$ by $n(\varpi_v^{m_v}\O_v)$ we have that 
\begin{align*}
\text{Vol}(F_\infty/M)I_\phi(y,\xi)&=\int_{F_\infty/M}\phi(n(x_\infty)a(y))\psi_\infty(-\xi x_\infty) \ dx_\infty\prod_{v<\infty}\int_{\O_v} \psi_v(-\xi \varpi_v^{m_v}x_v) \ dx_v,\\
&=\int_{F_\infty/M}y^{k/2}h(x_\infty+iy)e(-\Tr(\xi x_\infty)) \ dx_\infty\prod_{v<\infty}\int_{\O_v} \psi_v(-\xi\varpi_v^{m_v}x_v) \ dx_v.
\end{align*}
We have that $I_\phi(y,\xi)$ will only be non-zero when \[\int_{\O_v}\psi_v(-\xi\varpi_v^{m_v}x_v)\ dx_v\neq0\] for each $v$. Recall that $\psi_v=\psi_p\circ\Tr_{F_v/\Qp}$ and $\psi_p(x)=1$ for every $x\in\Z_p$. If the finite place integral is non-zero for each $v$ we write $\xi=\varpi_v^nu$ where $n\in\Z$ and $u\in\O_v^\times$. Therefore using (\ref{formula}) we have that \[\int_{\O_v}\psi_v(\xi\varpi_v^{m_v}x_v) \ dx_v=\int_{\O_v}\psi_v(\varpi_v^{m_v+n}uk) \ dk=
\begin{cases}
0, \ \text{if} \ m_v+n<d_v\\
1, \ \text{if} \ m_v+n\geq d_v,
\end{cases}\] 
Therefore we have that \[I_\phi(y,\xi)=
\begin{cases}
\frac{1}{\text{Vol}(F_\infty/M)}\int_{F_\infty/M}y^{k/2}h(x_\infty+iy)e(-\Tr(\xi x_\infty)) \ dx_\infty, \ \text{if} \ \xi\in M^{-1}\mathfrak{D}_F^{-1}\\
0, \ \text{otherwise}.
\end{cases}\] Using the Fourier expansion of $h$, \[h(x_\infty+iy)=\sum_{\xi'\in M^{-1}\mathfrak{D}_F^{-1}}a_h(\xi')e^{2\pi i \Tr(\xi'x_\infty)}e^{-2\pi \Tr(\xi'y)},\] we have that 
\[I_\phi(y,\xi)=
\begin{cases}
y^{k/2}a_h(\xi)e^{-2\pi i\Tr(\xi y)}, \ \text{if} \ \xi\in M^{-1}\mathfrak{D}_F^{-1}\\
0, \ \text{otherwise}.
\end{cases}\] 
Therefore for $\xi\in M^{-1}\mathfrak{D}_F^{-1}$ we have \[W_\phi(a(\xi)g_z)=e(\Tr(\xi x))y^{k/2}a_h(\xi)e(i\Tr(\xi y))=y^{k/2}a_h(\xi)e(\Tr(\xi z)),\] and for $\xi\notin M^{-1}\mathfrak{D}_F^{-1}$ we have $W_\phi(a(\xi)g_z)=0$.
\end{proof}

From now on we let $\phi$ be an adelic newform in the space of $\Pi$ and $\f$ be the corresponding classical Hilbert newform. We shall now also assume that $k_j\geq1$ for every $j=1,...,n$. Let $\sigma=\smatrix\in\Gamma_\mu(1)$ then recall that if $f_\mu$ is a Hilbert modular form for $\congsub$ we have $f_\mu||_k\sigma(z)$ is a Hilbert modular form for $\sigma^{-1}\Gamma_\mu(\n)\sigma$. We have for $\sigma\in\Gamma_\mu(1)$, using \eqref{CtoA}
\begin{align*}
f_\mu||_k\sigma(z)&=y^{-k/2}(f_\mu||\sigma g_z)(\mathbf{i})\\
&=y^{-k/2}\phi(x_\mu\iota_\infty(\sigma)g_z)\\
&=y^{-k/2}\phi(\iota_\infty(\sigma)x_\mu g_z)\\
&=y^{-k/2}\phi(\iotaf x_\mu g_z)\\
&=y^{-k/2}\phi(g_z\iotaf x_\mu),
\end{align*}
where $\iotaf x_\mu$ is an element of the finite adeles.\\

Our aim is to apply Lemma \ref{general} to the specific case of $f_\mu||_k\sigma$ where $\sigma\in\Gamma_\mu(1)$. By the above calculation, this corresponds to $(\iotaf x_\mu)\phi$ in the setup of Lemma \ref{general}. The first step to do this is by finding the ideal $M'$ of $\O_F$ corresponding to $M_v'=\varpi_v^{m'_v}\O_v$ such that $\big(\begin{smallmatrix}1 & M'_v \\ & 1\end{smallmatrix}\big)\in\sigma^{-1}\Gamma_\mu(\n)\sigma$. That is we require $\sigma\big(\begin{smallmatrix}1 & M'_v \\ & 1\end{smallmatrix}\big)\sigma^{-1}\in\congsub$. Equivalently for each place $v$, we require \[\begin{pmatrix}1 & \varpi_v^{m'_v}\O_v \\ & 1\end{pmatrix}\in\sigma^{-1}x_{\mu,v} K_v(\varpi_v^{n_v})x_{\mu,v}^{-1}\sigma.\] We can view 
\begin{equation}\label{sigma}
\sigma=x_{\mu,v}(\begin{smallmatrix}1 & \\ & \varpi_v^{d_v}\end{smallmatrix})(\begin{smallmatrix}a' & b'\\ c' & d' \end{smallmatrix}) (\begin{smallmatrix}1 & \\ & \varpi_v^{-d_v}\end{smallmatrix})x_{\mu,v}^{-1},
\end{equation}
where $A:=(\begin{smallmatrix}a' & b'\\ c' & d'\end{smallmatrix})\in\GL_2(\O_F)$ and $x_{\mu,v}=(\begin{smallmatrix}1 & \\ & \varpi_v^{t_v}\end{smallmatrix})$. Therefore 
\begin{align*}
&\begin{pmatrix} 1 & \varpi_v^{m'_v}\O_v \\ & 1\end{pmatrix} \in x_{\mu,v}\begin{pmatrix}1 & \\ & \varpi_v^{d_v}\end{pmatrix}A^{-1}\begin{pmatrix}1 & \\ & \varpi_v^{-d_v}\end{pmatrix} K_v(\varpi_v^{n_v})\begin{pmatrix}1 & \\ & \varpi_v^{d_v}\end{pmatrix}A\begin{pmatrix}1 & \\ & \varpi_v^{-d_v}\end{pmatrix}x_{\mu,v}^{-1}\\
&\Longleftrightarrow \begin{pmatrix} 1 & \varpi_v^{m'_v+t_v+d_v}\O_v \\ & 1\end{pmatrix}\in A^{-1}\begin{pmatrix}1 & \\ & \varpi_v^{-d_v}\end{pmatrix} K_v(\varpi_v^{n_v})\begin{pmatrix}1 & \\ & \varpi_v^{d_v}\end{pmatrix}A\\
&\Longleftrightarrow A\begin{pmatrix} 1 & \varpi_v^{m'_v+t_v}\O_v \\ & 1\end{pmatrix}A^{-1}\in \begin{pmatrix}1 & \\ & \varpi_v^{-d_v}\end{pmatrix}K_v(\varpi_v^{n_v})\begin{pmatrix}1 & \\ & \varpi_v^{d_v}\end{pmatrix}.\stepcounter{equation}\tag{\theequation}\label{condition}
\end{align*}
On computing the left hand side of (\ref{condition}) we require \[\begin{pmatrix} \eps^{-1}(a'd'-b'c'-a'c'\varpi_v^{m_v'+t_v+d_v}\alpha) & \eps^{-1}(a')^2\varpi_v^{m_v'+t_v+d_v}\alpha \\ -\eps^{-1}(c')^2\varpi_v^{m_v'+t_v+d_v}\alpha & \eps^{-1}(a'd'-b'c'+a'c'\varpi_v^{m_v'+t_v+d_v}\alpha) \end{pmatrix} \in y_v^{-1}K_v(\varpi_v^{n_v})y_v,\] where $\alpha\in\O_v, \ y_v=(\begin{smallmatrix}1 & \\ & \varpi_v^{d_v}\end{smallmatrix})$ and $\eps=\det A\in\O_F^\times$. If we now study the $\p$-valuation we obtain the following conditions
\begin{enumerate}
\item $2v_\p(a')+m_v'+t_v\geq 0$,
\item $2v_\p(c)+m_v'+t_v\geq n_v$.
\end{enumerate}
On rearranging these conditions we obtain \[m_v'\geq-d_v-t_v-2v_\p(a'), \ m_v'\geq -t_v-d_v+n_v-2v_\p(c').\] Hence \[m_v'\geq -t_v-d_v+\max\{-2v_\p(a'), n_v-2v_\p(c')\}.\] Let \[w_v(\sigma,\n)=\max\{-2v_\p(a'), n_v-2v_\p(c')\}=n_v-\min\{2v_\p(a')+n_v,2v_\p(c')\}.\] Then let $\mathfrak{w}_v(\sigma,\n)=\varpi_v^{w_v(\sigma,\n)}\O_v$ and $\mathfrak{w}(\sigma,\n)$ the corresponding ideal in $\O_F$, so \[\mathfrak{w}(\sigma,\n)=\n(((a')^2\n,(c')^2))^{-1}.\] This leads to $f_\mu||_k\sigma$ having a Fourier expansion of the form \[f_\mu||_k\sigma(z)=\sum_{\xi\in((t_\mu\O_F)\mathfrak{w}(\sigma,\n)^{-1})_+} a_\mu(\xi,\sigma)e(\Tr(\xi z)).\] We define 
\begin{equation}\label{FC}
c_\mu(\xi;f_\mu||_k\sigma)=N(t_\mu\O_F)^{-k_0/2}a_\mu(\xi;\sigma)\xi^{(k_0\mathbf{1}-k)/2},
\end{equation}
recall $k_0=\max\{k_1,...,k_n\}$ and $k_0\mathbf{1}=(k_0,...,k_0)$.

\begin{Remark}\label{dash}
Note from \eqref{sigma} we can view a matrix $\sigma=\smatrix\in\Gamma_\mu(1)$ as \[x_{\mu,v}(\begin{smallmatrix}1 & \\ & \varpi_v^{d_v}\end{smallmatrix})(\begin{smallmatrix}a' & b'\\ c' & d' \end{smallmatrix}) (\begin{smallmatrix}1 & \\ & \varpi_v^{-d_v}\end{smallmatrix})x_{\mu,v}^{-1},\] with $(\begin{smallmatrix}a' & b'\\ c' & d' \end{smallmatrix})\in\GL_2(\O_F)$. Therefore we have that \[\sigma_v=\begin{pmatrix} a_v & b_v \\ c_v & d_v \end{pmatrix}=\begin{pmatrix} a'_v & b'_v \\ \varpi_v^{t_v+d_v} c'_v & d_v \end{pmatrix}.\]
\end{Remark}

\begin{Prop}\label{prop}
Let $\mathfrak{w}(\sigma,\n)$ be the ideal of $\O_F$ defined by $\mathfrak{w}(\sigma,\n)=\n(\mathrm{gcd}((a')^2\n,(c')^2))^{-1}$. Let $\xi\in F^\times$. Then
\[W_{\phi}(a(\xi)g_z\iota_\emph{f}(\sigma^{-1}) x_\mu)=
\begin{cases}
y^{k/2}a_\mu(\xi;\sigma)e(\Tr(\xi z)), \ \text{if} \ \xi\in \big((t_\mu\O_F)\mathfrak{w}(\sigma,\n)^{-1}\big)_+  \\
0, \ \text{otherwise}.
\end{cases}\] 
\end{Prop}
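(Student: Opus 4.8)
The plan is to deduce Proposition \ref{prop} from Lemma \ref{general} by identifying the right Hilbert automorphic form, the right ideal $M$, and the right holomorphic function on $\H^n$. Recall from the computation preceding Remark \ref{dash} that $f_\mu||_k\sigma(z)=y^{-k/2}\phi(g_z\iota_\text{f}(\sigma^{-1})x_\mu)$, where $\iota_\text{f}(\sigma^{-1})x_\mu$ lies in the finite adeles. So I would set $\phi'=\big(\iota_\text{f}(\sigma^{-1})x_\mu\big)\phi$, i.e. $\phi'(g)=\phi(g\,\iota_\text{f}(\sigma^{-1})x_\mu)$, which is again a cusp form in the space of $\Pi$ (right translation preserves the space). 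The associated holomorphic function via $h(z)=j(g_z,\mathbf{i})^k\phi'(g_z)$ is then exactly $f_\mu||_k\sigma$, so its Fourier coefficients are the $a_\mu(\xi;\sigma)$.

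First I would check that $\phi'$ satisfies the hypotheses of Lemma \ref{general}: property (i) (the $\mathrm{SO}(2)^n$-equivariance) and property (iii) (the Casimir eigenvalue condition) are unaffected by right translation by a finite-adelic element, since that element commutes with $\GL_2(F_\infty)$; these hold for $\phi$ by the classical-adelic dictionary of Section \ref{dictionary}. Then I would determine the ideal $M'$, with $M'_v=\varpi_v^{m'_v}\O_v$, such that $\phi'$ is right invariant under $n(\varpi_v^{m'_v}\O_v)$ for all finite $v$ — equivalently $\big(\begin{smallmatrix}1 & M'_v\\ & 1\end{smallmatrix}\big)\in\sigma^{-1}x_{\mu,v}K_v(\varpi_v^{n_v})x_{\mu,v}^{-1}\sigma$. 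But this is precisely the local computation already carried out in the display \eqref{condition} and the paragraph following it: it yields $m'_v\geq -t_v-d_v+w_v(\sigma,\n)$, so the optimal ideal in the sense of Lemma \ref{general} corresponds to $M^{-1}\D_F^{-1}=(t_\mu\O_F)\mathfrak{w}(\sigma,\n)^{-1}$, after reconciling the $v$-exponents $t_v$ with $t_\mu$ and the different exponents $d_v$ with $\D_F$. This is the bookkeeping step where one must be careful that the local exponents assemble correctly into the global ideal $\mathfrak{w}(\sigma,\n)=\n(\mathrm{gcd}((a')^2\n,(c')^2))^{-1}$ and that the twist by $x_\mu$ contributes exactly the factor $(t_\mu\O_F)$; I expect this matching of indices to be the main obstacle, though it is essentially already done in the lead-up to the statement.

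Having verified the hypotheses and identified $M$ with $M^{-1}\D_F^{-1}=(t_\mu\O_F)\mathfrak{w}(\sigma,\n)^{-1}$, I would apply Lemma \ref{general} directly to $\phi'$ and $h=f_\mu||_k\sigma$. The conclusion of that lemma reads
\[
W_{\phi'}(a(\xi)g_z)=\begin{cases} y^{k/2}a_h(\xi)e(\Tr(\xi z)), & \xi\in M^{-1}\D_F^{-1},\\ 0, & \text{otherwise},\end{cases}
\]
and since $W_{\phi'}(a(\xi)g_z)=W_\phi(a(\xi)g_z\,\iota_\text{f}(\sigma^{-1})x_\mu)$ by the definition \eqref{Def} of the Whittaker function (the integral over $\A_F/F$ is unchanged by right translation, which commutes with left multiplication by $n(x')$ and with the character), and $a_h(\xi)=a_\mu(\xi;\sigma)$, this is exactly the claimed formula, with the support condition $\xi\in\big((t_\mu\O_F)\mathfrak{w}(\sigma,\n)^{-1}\big)_+$ — the total positivity coming from holomorphy of $f_\mu||_k\sigma$ at the cusps, as usual. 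The only genuine content beyond invoking Lemma \ref{general} is the index computation, which the preceding pages supply.
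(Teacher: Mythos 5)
Your proposal matches the paper's proof: the paper also applies Lemma~\ref{general} to $\phi'=\Pi(\iota_{\text{f}}(\sigma^{-1})x_\mu)\phi$, using $W_\phi(a(\xi)g_z\iota_{\text{f}}(\sigma^{-1})x_\mu)=W_{\phi'}(a(\xi)g_z)$ together with the determination of $M'$ carried out in the displays preceding the Proposition. Your index bookkeeping $(M')^{-1}\D_F^{-1}=(t_\mu\O_F)\mathfrak{w}(\sigma,\n)^{-1}$ is correct, and your remark on total positivity (coming from holomorphy at cusps, not from Lemma~\ref{general} directly) is a reasonable supplement.
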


\begin{proof}
This follows from applying Lemma \ref{general} to the automorphic form $\phi'=\Pi(\iotaf x_\mu)\phi$ and using the fact $W_{\phi}(a(\xi)g_z\iotaf x_\mu)=W_{\phi'}(a(\xi)g_z)$.
\end{proof}

We are able to evaluate $W_\infty$ via, 
\begin{equation}\label{Winfty}
W_\infty(a(\xi)g_z)=(\xi y)^{k/2}e(\Tr(\xi z)),
\end{equation}
see for example \cite[Section 3]{Sah16}. 
\section{Main Result}\label{MR}

In this section we prove our main result. We do this by applying a similar method to that used in the proof of Theorem \ref{mf2}.\\

In Section \ref{Prelim} we showed that $a_\mu(\xi;\sigma)$ can be written in terms of the global Whittaker function. Recall that the global Whittaker function can be broken up into a product of local Whittaker functions. This means that finding sufficient conditions for $\tau$ to fix $c_\mu(\xi;f_\mu||_k\sigma)$, boils down to finding sufficient conditions such that $\tau$ fixes the local Whittaker newforms. With this in mind we have the following decomposition 
\begin{equation}\label{decom}
W_\phi(g)=C\prod_{v|\infty}W_\infty(g_\infty)\prod_{v<\infty}W_v(g_v),
\end{equation}
where $C$ is a constant. This decomposition is due to the uniqueness of Whittaker functionals. We wish to compute the constant $C$. To do this let $g_\infty=g_z$ and $g_v=x_{1,v}$ for every $v<\infty$. Then using Proposition \ref{prop} and the fact that $\f$ is normalised, that is, $a_\mu(1;1)=1$ we have \[W_\phi(g_zx_1)=y^{k/2}a_1(1;1)e(\Tr(z))=y^{k/2}e(\Tr(z)).\] On the other hand using (\ref{Winfty}), (\ref{decom}) and the fact that $W_v(1)=1$ we obtain \[W_\phi(g_zx_1)=Cy^{k/2}e(\Tr(z)).\] Thus $C=1$.\\

Let $\f=(f_1,...,f_h)$ a normalised cuspidal Hilbert newform with weight $k=(k_1,...,k_n)$ such that $k_1\equiv...\equiv k_n \pmod{2}$. We let $\Q(\f)$ denote the number field generated by $c_\mu(\xi;f_\mu)$ as $\xi$ varies over $F$ and $\mu$ varies over $1\leq\mu\leq h$. Then for $\f, 1\leq\mu\leq h, \ \text{and}\ \sigma\in\Gamma_\mu(1)$ we let $\Q(\f,\mu,\sigma)$ denote the field generated by $c_\mu(\xi;f_\mu||_k\sigma)$ as $\xi$ varies over $F$. Note that $\Q(\f)$ is the compositum of the fields $\Q(f,\mu,1)$ as $\mu$ varies over $1\leq\mu\leq h$.\\

We are now able to state and prove the main result of this article.

\begin{Thm}\label{Thm}
Let $\f=(f_1,...,f_h)$ be a normalised cuspidal Hilbert newform of level $\n$ and weight $k=(k_1,...,k_n)$ with $k_1\equiv...\equiv k_n \pmod{2}$. Let $1\leq\mu\leq h$ and $\sigma=\smatrix\in\Gamma_\mu(1)$. Then $\Q(\f,\mu,\sigma)$ lies in the number field $\Q(\f)(\zeta_{N_0})$ where $N_0$ is the integer such that $N_0\Z=\n/(cdt_\mu^{-1}\mathfrak{D}_F^{-1},\n)\cap\Z$.
\end{Thm}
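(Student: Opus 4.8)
The plan is to reduce the statement to the local Whittaker newform invariance encoded in Lemma \ref{key}, mirroring the structure of the proof of Theorem \ref{mf2} but now using the adelic Whittaker expansion developed in Section \ref{Prelim}. First I would fix $\tau\in\Aut(\C)$ stabilising $\Q(\f)(\zeta_{N_0})$, so that in particular $\tau$ fixes $\Q(\f)$ and all $N_0$-th roots of unity. By the remark before Lemma \ref{relation} it suffices to show $\tau$ fixes each $c_\mu(\xi;f_\mu||_k\sigma)$. Combining Proposition \ref{prop}, the factorisation \eqref{decom} with $C=1$, the formula \eqref{Winfty} for $W_\infty$, and the definition \eqref{FC}, I would write
\begin{equation*}
c_\mu(\xi;f_\mu||_k\sigma)=N(t_\mu\O_F)^{-k_0/2}\,\xi^{(k_0\mathbf 1-k)/2}\cdot\xi^{-k/2}\prod_{v<\infty}W_v\big(a(\xi)\iota_\text f(\sigma^{-1})x_{\mu,v}\big),
\end{equation*}
after absorbing the archimedean contribution; I would tidy this so that the only $\tau$-sensitive factors are the finite Whittaker values (the power of $\xi$ and $N(t_\mu\O_F)$ are rational, and $\xi^{k_0\mathbf 1/2}\xi^{-k/2}\cdot\xi^{-k/2}$ matches the normalisation $\Pi_v\otimes|\ |_v^{k_0/2}$ that appears in \eqref{action}). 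Hence it is enough to prove that $\tau$ fixes $\prod_{v<\infty}W_v(a(\xi)\iota_\text f(\sigma^{-1})x_{\mu,v})$, equivalently, for each finite $v$, to control $\tau\big(W_v(a(\xi)\sigma_v^{-1}x_{\mu,v})\big)$ via \eqref{action}.

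Next, for each finite place $v$ I would apply Lemma \ref{relation} (so that $\Pi_v\otimes|\ |_v^{k_0/2}\cong{}^\tau(\Pi_v\otimes|\ |_v^{k_0/2})$, since $\tau$ fixes $\Q(\f)$) together with \eqref{action}, which gives
\begin{equation*}
\tau\big(W_v(g_v)\big)\,\tau\big(|\det g_v|_v^{k_0/2}\big)=W_v\big(a(\alpha_{\tau,v})g_v\big)\,|\det g_v|_v^{k_0/2},
\end{equation*}
where $\alpha_{\tau,v}$ is the image of $\tau$ under the cyclotomic character at $v$. Taking $g_v=a(\xi)\sigma_v^{-1}x_{\mu,v}$, the determinant factor contributes a rational number, so the remaining task is to show $W_v(a(\alpha_{\tau,v})a(\xi)\sigma_v^{-1}x_{\mu,v})=W_v(a(\xi)\sigma_v^{-1}x_{\mu,v})$ for all $v$. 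Since $W_v$ is right-invariant under $K_v(\p_v^{a(\Pi_v)})=K_v(\n_v)$ (as $\f$ is a newform of conductor $\n$), this reduces to the conjugation condition
\begin{equation*}
x_{\mu,v}\sigma_v^{-1}\,a(\alpha_{\tau,v})\,\sigma_v\, x_{\mu,v}^{-1}\in K_v(\n_v)
\end{equation*}
(after moving $a(\alpha_{\tau,v})$ past $a(\xi)$, which commute). Using Remark \ref{dash} to write $\sigma_v=\big(\begin{smallmatrix} a'_v & b'_v \\ \varpi_v^{t_v+d_v}c'_v & d_v\end{smallmatrix}\big)$ and computing the conjugate $x_{\mu,v}\sigma_v^{-1}a(\alpha_{\tau,v})\sigma_v x_{\mu,v}^{-1}$ explicitly, the membership in $K_v(\n_v)$ will translate into a congruence $\alpha_{\tau,v}\equiv 1$ modulo the $v$-part of the ideal $\n/(cdt_\mu^{-1}\mathfrak D_F^{-1},\n)$ — here the off-diagonal entries produce, up to units, factors of $cd$ and of $t_\mu$ and $\mathfrak D_F$, exactly matching the ideal in the statement. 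The definition of $N_0$ via $N_0\Z=\n/(cdt_\mu^{-1}\mathfrak D_F^{-1},\n)\cap\Z$ is precisely the condition that $\tau$ fixing $\zeta_{N_0}$ forces $\alpha_{\tau,v}\equiv1$ to the required $v$-adic precision at every finite $v$, via the identification of $\Aut(\C)$ with $\hat\Z^\times$ in Lemma \ref{key}.

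I would then assemble: for every finite $v$ the conjugation condition holds, hence $\tau$ fixes each local factor $W_v(\cdots)$ up to a rational determinant twist, the determinant twists and the explicit $\xi$-powers and norm factors are manifestly $\tau$-fixed, and therefore $\tau$ fixes $c_\mu(\xi;f_\mu||_k\sigma)$; since this holds for all $\tau$ stabilising $\Q(\f)(\zeta_{N_0})$, we get $\Q(\f,\mu,\sigma)\subseteq\Q(\f)(\zeta_{N_0})$.

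The main obstacle I anticipate is the bookkeeping in the explicit conjugation computation of $x_{\mu,v}\sigma_v^{-1}a(\alpha_{\tau,v})\sigma_v x_{\mu,v}^{-1}$ and correctly identifying which power of $\varpi_v$ divides each entry, so that the resulting congruence condition on $\alpha_{\tau,v}$ lands exactly on the ideal $\n/(cdt_\mu^{-1}\mathfrak D_F^{-1},\n)$ rather than something coarser; in particular I must be careful that the $b$-entry of $\sigma$ (which carries a $\mathfrak D_F^{-1}$ and a $t_\mu^{-1}$) and the $c$-entry (which carries $\n\mathfrak D_F$ and $t_\mu$) interact with the conjugating $x_{\mu,v}$ and with $d_v=c(\psi_v)$ correctly, and that the determinant-unit $\eps=\det A\in\O_F^\times$ and the $\det\sigma\in\O_F^\times$ do not spoil the valuations. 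Matching this to the integer $N_0$ (rather than merely to the ideal) requires invoking the identification in Lemma \ref{key} between $\Aut(\C/\Q)$ acting through cyclotomic characters and $\hat\Z^\times\hookrightarrow\prod_v\prod_{\p|v}\O_\p^\times$, and checking that $\tau$ fixing $\zeta_{N_0}$ is equivalent to $\alpha_{\tau}\equiv 1$ modulo the relevant ideal at all $v$ simultaneously; this is the same mechanism as in the proof of Theorem \ref{mf2}, now carried out over $F$.
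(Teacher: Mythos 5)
Your overall strategy matches the paper's: pass to the Whittaker expansion via Proposition \ref{prop} and \eqref{decom}, apply Lemma \ref{relation} and \eqref{action}, reduce to a conjugation condition for the local newforms, and extract a congruence on $\alpha_\tau$. However, there are two concrete problems in the execution.

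First, your conjugation condition is written in the wrong direction. You want $W_v\bigl(a(\alpha_{\tau,v})\,a(\xi)\sigma_v^{-1}x_{\mu,v}\bigr)=W_v\bigl(a(\xi)\sigma_v^{-1}x_{\mu,v}\bigr)$. Since $W_v$ is right $K_v(\n_v)$-invariant and $a(\alpha_{\tau,v})$ commutes with $a(\xi)$, this requires $\bigl(\sigma_v^{-1}x_{\mu,v}\bigr)^{-1}a(\alpha_{\tau,v})\bigl(\sigma_v^{-1}x_{\mu,v}\bigr)=x_{\mu,v}^{-1}\sigma_v\,a(\alpha_{\tau,v})\,\sigma_v^{-1}x_{\mu,v}\in K_v(\n_v)$, not $x_{\mu,v}\sigma_v^{-1}a(\alpha_{\tau,v})\sigma_v x_{\mu,v}^{-1}$ as you wrote. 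This is not a cosmetic inversion: the off-diagonal entries of $x_{\mu,v}^{-1}\sigma_v\,a(\alpha_{\tau,v})\,\sigma_v^{-1}x_{\mu,v}$ carry the coefficients $ab(1-\alpha_\tau)t_\mu$ and $cd(\alpha_\tau-1)t_\mu^{-1}$, whence the $(1,2)$-condition is vacuous for $\sigma\in\Gamma_\mu(1)$ and the $(2,1)$-condition produces exactly the ideal $\n/(cdt_\mu^{-1}\D_F^{-1},\n)$; whereas your version has off-diagonal coefficients $bd(\alpha_\tau-1)t_\mu^{-1}$ and $ac(1-\alpha_\tau)t_\mu$, so both entries impose constraints and the resulting congruence involves $ac$ and $bd$, which does not match the theorem's $N_0$.

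Second, the assembly step at the end is not as innocuous as you claim. When $k_0$ is odd (which the hypothesis $k_1\equiv\cdots\equiv k_n\pmod 2$ permits), the factors $N(t_\mu\O_F)^{-k_0/2}$, $\xi^{k_0\mathbf 1/2}$, and the local $|\det g_v|_v^{k_0/2}=|\xi t_\mu|_v^{k_0/2}$ are square roots of rationals, hence not individually fixed by $\tau$; it is simply false that they are ``manifestly $\tau$-fixed''. The paper resolves this by computing the ratio $\tau\bigl(c_\mu(\xi;f_\mu||_k\sigma)\bigr)/c_\mu(\xi;f_\mu||_k\sigma)$ in full, using the product formula $\prod_{v<\infty}|\xi|_v=\prod_{v|\infty}|\xi|_v^{-1}$ so that the archimedean and non-archimedean half-integer powers of $|N(\xi)|$ combine into the integer power $|N(\xi)|^{k_0}\in\Q$, and likewise all the $N(t_\mu\O_F)^{-k_0/2}$ contributions cancel. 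Without this, the proof has a genuine gap for odd $k_0$. Once you correct the conjugation direction and carry out this product-formula cancellation, your argument coincides with the paper's.
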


\begin{proof}
Let $\tau\in\Aut(\C)$ fix $\Q(\f)(\zeta_{N_0})$ where $N_0$ is the unique positive integer such that $N_0\Z=\n/(cdt_\mu^{-1}\mathfrak{D}_F^{-1},\n)\cap\Z$. Thus specifically $\tau$ fixes $\Q(\f)$ and all the $\zeta_{N_0}$ roots of unity.\\

From Proposition \ref{prop} and evaluating $W_\infty$ we have that 
\begin{align*}
a_\mu(\xi;\sigma)&=y^{-k/2}e(-\Tr(\xi z))W_{\phi}(a(\xi)g_z\iotaf x_\mu)\\
&=y^{-k/2}e(-\Tr(\xi z))(\xi y)^{k/2}e(\Tr(\xi z))\prod_{v<\infty}W_v(a(\xi)\iotaf x_\mu)\\
&=\xi^{k/2}\prod_{v<\infty}W_v(a(\xi)\iotaf x_\mu).
\end{align*}
Recall from (\ref{FC}) we have \[c_\mu(\xi;f_\mu||_k\sigma)=N(t_\mu\O_F)^{-k_0/2}\xi^{(k_0\mathbf{1}-k)/2}a_\mu(\xi;\sigma).\] Therefore 
\begin{equation}\label{cmu}
c_\mu(\xi;f_\mu||_k\sigma)=N(t_\mu\O_F)^{-k_0/2}\xi^{k_0\mathbf{1}/2}\prod_{v<\infty}W_v(a(\xi)\iotaf x_\mu).
\end{equation}
From the fact that $\tau$ fixes $\Q(\f)$ and using \cite[Proposition 2.6]{Shi78}, multiplicity one and Lemma \ref{relation} we have that $\f=\f^\tau$. Therefore, using Lemma \ref{relation} we have \[\Pi_v\otimes| \ |_v^{k_0/2}\cong{}^{\tau}(\Pi_v\otimes| \ |_v^{k_0/2}).\] So, by (\ref{action}), we have that for each non-archimedean place $v$
\begin{equation}\label{tauWv}
\tau(W_v(a(\xi)\iotaf x_\mu))\tau\big((|\xi|_v|t_\mu|_v)^{k_0/2}\big)=W_v(a(\alpha_\tau)a(\xi)\iotaf x_\mu)(|\xi|_v|t_\mu|_v)^{k_0/2}.
\end{equation}

Since all the $N_0$ roots of unity are fixed by $\tau$ we have that \[\alpha_\tau\equiv1 \pmod{\n_v/(c_vd_v\varpi_v^{-t_v-d_v},\n_v)},\] where $c_v$ and $d_v$ are the $v$ parts of $c$ and $d$ respectively. Consider the product \[x_\mu^{-1} \begin{pmatrix} a & b \\ c & d \end{pmatrix} \begin{pmatrix} \alpha_\tau & \\ & 1 \end{pmatrix} \begin{pmatrix} a & b \\ c & d \end{pmatrix}^{-1}x_\mu.\] On using \eqref{sigma} we have this product is equal to 
\begin{equation}\label{product}
\begin{pmatrix} 1 & \\ & \varpi_v^{d_v}\end{pmatrix}\begin{pmatrix} a' & b' \\ c' & d' \end{pmatrix}\begin{pmatrix} 1 & \\ & \varpi_v^{-d_v} \end{pmatrix}\begin{pmatrix} \alpha_\tau & \\ & 1 \end{pmatrix} \begin{pmatrix} 1 & \\ & \varpi_v^{d_v} \end{pmatrix} \begin{pmatrix} a' & b' \\ c' & d' \end{pmatrix} \begin{pmatrix} 1 & \\ & \varpi_v^{-d_v} \end{pmatrix}.
\end{equation}
On evaluating \eqref{product} we have \[\begin{pmatrix}\eps^{-1}(a'd'\alpha_\tau-b'c') & \eps^{-1}a'b'(1-\alpha_\tau)\varpi_v^{-d_v}\\ \eps^{-1}c'd'(\alpha_\tau-1)\varpi_v^{d_v} & \eps^{-1}(a'd'-b'c'\alpha_\tau)\end{pmatrix}, \ \text{where} \ \eps=\det(\begin{smallmatrix}a' & b' \\ c' & d' \end{smallmatrix}).\] We want to show that this matrix is an element of $K_v(\n)$. Therefore we need the following conditions to be satisfied 
\begin{enumerate}
\item $\eps^{-1}a'b'(1-\alpha_\tau)\varpi_v^{-dv}\in\mathfrak{D}_v^{-1}$
\item $\eps^{-1}c'd'(\alpha_\tau-1)\varpi_v^{d_v}\in \n_v\mathfrak{D}_v$
\item $\eps^{-1}(a'd'-b'c'\alpha_\tau)\in\O_v$.
\end{enumerate}
Recall that $a',b',c'$ and $d'$ are elements of $\O_F$ and $\eps^{-1}\in\O_F^\times$ so conditions (1) and (3) are satisfied with no additional assumptions on $\alpha_\tau$. Now consider condition (2). From the fact we have $\alpha_\tau\equiv1 \pmod{\n_v/(c_vd_v\varpi_v^{-t_v-d_v},\n_v)}$ which is equivalent to $\alpha_\tau\equiv1 \pmod{n_v/(c'_vd'_v,\n_v)}$, using Remark \ref{dash}. Thus we have that condition (2) is also satisfied. Therefore \[\begin{pmatrix}\eps^{-1}(a'd'\alpha_\tau-b'c') & \eps^{-1}a'b'(1-\alpha_\tau)\varpi_v^{-dv}\\ \eps^{-1}c'd'(\alpha_\tau-1)\varpi_v^{d_v} & \eps^{-1}(a'd'-b'c'\alpha_\tau)\end{pmatrix}\in K_v(\n).\] Recall that in general for $W_v(g_1)=W_v(g_2)$ it suffices that $g_1^{-1}g_2\in K_v(\n)$. Therefore we have that 
\begin{equation}\label{Wvfin}
W_v(a(\xi)\iotaf x_\mu)=W_v(a(\alpha_\tau)a(\xi)\iotaf x_\mu),
\end{equation}
for every finite place. Combining (\ref{tauWv}) and (\ref{Wvfin}), we have 
\begin{equation}\label{fraction}
\frac{\tau\big(W_v(a(\xi)\iotaf x_\mu)\big)}{W_v(a(\xi)\iotaf x_\mu)}=\frac{|\xi t_\mu|_v^{k_0/2}}{\tau(|\xi t_\mu|_v)^{k_0/2}}.
\end{equation}
We now consider the quotient \[\frac{\tau\big(c_\mu(\xi;f_\mu||_k\sigma)\big)}{c_\mu(\xi;f_\mu||_k\sigma)}.\] To prove the result it now suffices to show this quotient is equal to one. Therefore by (\ref{cmu}) and (\ref{fraction}) we have 
\begin{align*}
\frac{\tau\big(c_\mu(\xi;f_\mu||_k\sigma)\big)}{c_\mu(\xi;f_\mu||_k\sigma)}&=\frac{\tau\big(N(t_\mu\O_F)^{-k_0/2}\big)\tau(\xi^{k_0/2})}{N(t_\mu\O_F)^{-k_0/2}\xi^{k_0/2}}\prod_{v<\infty}\frac{|\xi t_\mu|_v^{k_0/2}}{\tau(|\xi t_\mu|_v)^{k_0/2}}\\
&=\frac{\tau\big(\prod_{v<\infty}|t_\mu|_v^{k_0/2}\big)}{\prod_{v<\infty}|t_\mu|_v^{k_0/2}}\frac{\tau\big(\prod_{v|\infty}|\xi|_v^{k_0/2}\big)}{\prod_{v|\infty}|\xi|_v^{k_0/2}}\frac{\big(\prod_{v<\infty}|t_\mu|_v|\xi|_v\big)^{k_0/2}}{\tau\big(\prod_{v<\infty}|t_\mu|_v|\xi|_v\big)^{k_0/2}}\\
&=\frac{\tau\big(\prod_{v|\infty}|\xi|_v^{k_0}\big)}{\prod_{v|\infty}|\xi|_v^{k_0}}, \ \text{using global norm formula}.
\end{align*}
We have that $\prod_{v|\infty}|\xi|_v=|N(\xi)|\in\Q$. Hence \[\frac{\tau\big(\prod_{v|\infty}|\xi|_v^{k_0}\big)}{\prod_{v|\infty}|\xi|_v^{k_0}}=\frac{\prod_{v|\infty}|\xi|_v^{k_0}}{\prod_{v|\infty}|\xi|_v^{k_0}}=1.\] Therefore we have that $\tau$ fixes $c_\mu(\xi;f_\mu||_k\sigma)$ and so \[c_\mu(\xi;f_\mu||_k\sigma)\in\Q(\f)(\zeta_{N_0}).\]
\end{proof}

\begin{Remark}\label{special}
We see that in the special case of $F=\Q$ we obtain the result of Brunault and Neururer for newforms.
\end{Remark}

\begin{Question}
Our method to prove Theorem \ref{mf2} and Theorem \ref{Thm} was to find sufficient conditions not necessary ones. Therefore one can ask if the number field $\Q(\f)(\zeta_{N_0})$ is optimal, that is, do we have $\Q(\f,\mu,\sigma)=\Q(\f)(\zeta_{N_0})$?\\

Note that if $F=\Q$ this is known to be true \cite[Theorem 7.6]{FEatC}, so one can restrict their attention to the case of $n>1$.
\end{Question}

\begin{Question}
It would be interesting to generalise the results in this paper to the setting of Hilbert newforms with non-trivial central character and specifically what will the number field be.
\end{Question}

\begin{tiny}
SCHOOL OF MATHEMATICAL SCIENCES, QUEEN MARY UNIVERSITY OF LONDON, LONDON E1 4NS, UK

\textit{E-mail address}: tim.davis@qmul.ac.uk 

\end{tiny}

\end{document}